\def\bCD{\begin{tikzcd}}
\def\eCD{\end{tikzcd}}
\newtheorem{TEO}{Theorem}[section]
\newtheorem{DEF}[TEO]{Definition}
\newtheorem{REM}[TEO]{Remark}
\theoremstyle{definition}
\newtheoremstyle{dico}
 {\baselineskip}   
  {\topsep}   
  {}  
  {0pt}       
  {} 
  {.}         
  {5pt plus 1pt minus 1pt} 
  {}          
\theoremstyle{dico}
\newtheorem{say}[TEO]{}
\numberwithin{equation}{section}
\def\OO{{\mathcal O}}
\newcommand\Ga{\Gamma}
\newcommand\Z{\mathbb Z}
\newcommand\ZZ{\mathbb Z}
\newcommand\R{\mathbb R} \newcommand\Q{\mathbb Q}
\newcommand\Co{\mathbb C}
\newcommand\A{{\mathsf A}}
\newcommand\T{{\mathsf T}}
\newcommand\Pic{\operatorname{Pic}}
\newcommand{\RR}{\mathsf{R}}
\newcommand{\Nm}{\operatorname{Nm}}
\newcommand{\Tei}{{\mathsf{T}}}
\newcommand{\M}{{\mathsf{M}}}
\newcommand{\meno}{^{-1}}
\newcommand{\Aut}{\operatorname{Aut}}
\newcommand{\End}{\operatorname{End}}
\newcommand{\restr}[1]{\vert_{#1}}
\newcommand{\PP}{{\mathbb P}}
\newcommand{\eps}{\varepsilon}
\renewcommand{\phi}{\varphi}
\newcommand{\lds}{\ldots}
\newcommand{\cds}{\cdots}
\newcommand{\cd}{\cdot}
\newcommand{\sx}{\langle}
\newcommand{\xs}{\rangle}
\newcommand{\lra}{\longrightarrow}
\newcommand{\ra}{\rightarrow}
\newcommand{\ga}{\gamma}
\newcommand{\alfa}{\alpha}
\newcommand{\vacuo}{\emptyset}
\newcommand{\La}{\Lambda}
\newcommand{\GL}{\operatorname{GL}}
\newcommand{\sieg}{\mathfrak{S}}
\newcommand{\Sp}                {\operatorname {Sp}}
 \newcommand{\ag}{\mathsf{A}_g}
  \newcommand{\agd}{\mathsf{A}^{\delta}_g}
 \newcommand{\zg}{\mathsf{Z}}
 \newcommand{\rgb}{\mathsf{R}_{g,b}}
 \newcommand{\pgb}{\mathsf{P}_{g,b}}
\newcommand{\y}{\Xi}
\newcommand{\dat}{{( \tilde{G}, \ttheta, \sigma)}}
 \newcommand{\braid}{\mathbf{B}_r}
\newcommand{\Fix}{\operatorname{Fix}}
\newcommand{\TG}{{\tilde{G}}}
\newcommand{\tG}{{\tilde{G}}}
\newcommand{\tg}{{\tilde{g}}}
\newcommand{\ttheta}{{\tilde{\theta}}}
\newcommand{\Mod}{\operatorname{Mod}}
\newcommand{\tj}{\widetilde{j}}  
\newcommand{\tC} {{\tilde{C} }}
\newcommand{\MAGMA}{\texttt{MAGMA}}
\begin{document}

 \title{Shimura curves in the Prym loci of  ramified double covers}

 \author[P. Frediani]{Paola Frediani} \address{ Dipartimento di
   Matematica, Universit\`a di Pavia, via Ferrata 5, I-27100 Pavia,
   Italy } \email{{\tt paola.frediani@unipv.it}}

 \author[G.P. Grosselli]{Gian Paolo Grosselli} \address{Dipartimento di
   Matematica, Universit\`a di Pavia, via Ferrata 5, I-27100, Pavia,
   Italy } \email{{\tt g.grosselli@campus.unimib.it}}

 \begin{abstract}
We study Shimura curves of PEL type in the space of polarised abelian varieties $\A_p^{\delta}$ generically contained in the ramified Prym locus.
We generalise to  ramified double covers, the construction done in \cite{cfgp}  in the unramified case and in the case of two ramification points. Namely, we construct  families of  double covers which are compatible with a fixed group action
on the base curve. We only consider the case of one-dimensional families and where the 
quotient of the base curve by the group is $\mathbb P^1$. Using computer algebra
we obtain 184 Shimura curves contained in the (ramified) Prym loci.
   \end{abstract}

 \thanks{The authors were partially supported MIUR PRIN 2017
	``Moduli spaces and Lie Theory'' ,  by MIUR, Programma Dipartimenti di Eccellenza
	(2018-2022) - Dipartimento di Matematica ``F. Casorati'',
	Universit\`a degli Studi di Pavia and by INdAM (GNSAGA)  }

 \maketitle

 \section{Introduction}

Let  $ { \rgb}$ be the moduli space parametrising isomorphism classes of triples $[(C, \eta, B)]$ where $C$ is a smooth complex projective curve of genus $g$, $B$ is a reduced effective divisor of degree $b$ on $C$ and $\eta$ is a line bundle on $C$ such that $\eta^2={\mathcal O}_{C}(B)$. To such data it is associated a double cover of $C$, $f:\tilde{C}\rightarrow C$ branched on $B$. 

 The Prym variety
 associated to $[(C,\eta, B)]$ is the connected component containing the origin of the kernel of the norm map $\Nm_{f}: J\tilde{C} \to JC$.
For $b>0$, $\ker \Nm_{f}$ is connected.  It is a  polarised abelian variety of
 dimension $g-1+\frac{b}{2}$, denoted by $P(C,\eta,B)$ or equivalently
 $P(\tilde{C}, C)$. 

Let $\Xi$ be the restriction on $P(\tC,C)$ of the principal polarisation on $J\tilde{C}$.
For $b>0$  the polarisation $\Xi$ is of type $\delta=(1,\dots, 1, \underbrace{2,\dots,2}_{g \text{ times}})$.
If $b=0$ or $2$, $\Xi$ is twice a principal polarisation and we endow $P(\tilde{C}, C)$ with this principal polarisation.


 This defines the Prym map  $$\pgb: \rgb \longrightarrow {\A}^{\delta}_{g-1+\frac{b}{2}},\quad [(C,\eta,B)] \longmapsto  [(P(C,\eta,B), \Xi)],$$
 
 where $ {\A}^{\delta}_{g-1+\frac{b}{2}}$ is the moduli space of 
 abelian varieties of dimension $g-1+\frac{b}{2} $ with a polarization of type $\delta$. We define the Prym locus as the closure in ${\A}^{\delta}_{g-1+\frac{b}{2}}$ of the image of the map $\pgb$. 
 
The Prym map $\pgb$ is generically finite, if and only if 
$
\dim \rgb \leq \dim {\A}^{\delta}_{g-1+\frac{b}{2}},
$
 (see \cite{lange-ortega}). This holds if: either $b\geq 6$ and $g\geq 1$, or $b=4$ and $g\geq 3$, $b=2$ and $g\geq 5$,  $b=0$ and $g \geq 6$. If $b =0$  the Prym map is generically injective for $g \geq 7$  (\cite{friedman-smith}, \cite{kanev-global-Torelli}). 
If $r>0$, Marcucci and Pirola \cite{pietro-vale}, and, for the missing cases, Marcucci and Naranjo \cite{mn} and Naranjo and Ortega \cite{no} have proved the generic injectivity in all the cases except for $b=4$, $g =3$, which was studied  by Nagaraj and Ramanan, and Bardelli, Ciliberto and Verra (\cite{nagarama}, \cite{bcv}) and for which the degree of the Prym map is $3$. 
Recently, a global Torelli theorem was proved for all $g$ and $b\ge 6$ (\cite{ikeda} for $g=1$ and \cite{no1} for all $g$). 

In \cite{cfgp} an analogous question to the Coleman-Oort conjecture was formulated for the classical Prym maps and for ${\mathsf{P}_{g,2}}$. Namely, the authors asked whether there exist Shimura subvarieties of $\A_g$ that are generically
   contained in the Prym loci ${\mathsf{P}}_{g+1,0}  (\RR_{g+1,0})$ and $ {\mathsf{P}}_{g,2} (\RR_{g,2})$ for $g$ sufficiently high. 
A subvariety
 $Z\subset \A_g$ is said to be generically contained in the Prym locus
  ${\mathsf{P}}_{g+1,0}  (\RR_{g+1,0})$ if $Z \subset \overline {{\mathsf{P}}_{g+1,0}  (\RR_{g+1,0})}$,
 $Z \cap  {\mathsf{P}}_{g+1,0}  (\RR_{g+1,0}) \neq \vacuo$ and $Z $ intersects the locus of
 irreducible principally polarized abelian varieties. The same applies for $ {\mathsf{P}}_{g,2} (\RR_{g,2})$. 
 In \cite{cfgp} many examples of Shimura curves generically contained in these Prym loci were found using families of Galois covers of ${\mathbb P}^1$ and computations made with \MAGMA. All examples found are contained  in $ \A_g$ for $g \leq 12$.

In this paper we address a similar question for all ramified Prym maps $\pgb$. In fact in \cite{cf3} and \cite{cf4} the second fundamental form of the Prym map $\pgb$ for $b=0$, and for $b>0$ was studied to give an upper bound for the dimension of a germ of a totally geodesic submanifold, and hence of a Shimura subvariety of ${\A}^{\delta}_{g-1+\frac{b}{2}}$ contained in the Prym locus.

The expression of the second fundamental form of the Prym maps is similar to the one of the Torelli map studied in \cite{cpt}, \cite{cfg}, \cite{gpt}, \cite{fp}, \cite{gh} and we expect that for high values of $p$, there should not exist totally geodesic subvarieties of ${\A}^{\delta}_{p}$
generically contained in the (ramified) Prym loci. 

In this paper we use a method which is similar to the one used in \cite{cfgp} to find examples of Shimura curves contained in the (ramified) Prym loci. The idea is to find families of Galois covers $ \psi_t: \tC_t \to  \tC_t/\tG \cong {\mathbb P}^1$, with 4 branch points and such that the group $\tG$ admits a central involution $\sigma \in \tG$. If we denote by $G$ the quotient $\tG/\langle \sigma \rangle$, there exists a factorisation of the map $\psi_t$ as the composition $\pi _t\circ f_t$, where $f_t: \tC_t \to C_t = \tC_t/\langle \sigma \rangle$ is a double cover and $\pi_t: C_t \to C_t/G \cong \tC_t/\tG \cong {\mathbb P}^1$. 
Then we give a sufficient condition that ensures that the Prym varieties  $P(\tC_t, C_t)$ of the double covers  $f_t: \tC _t\to C_t$ yield a Shimura curve in ${\A}^{\delta}_{\tg-g}$, where $\tg$ (resp. $g$) denotes the genus of $\tC_t$ (resp. $C_t$). 
This is done in \ref{teo1ram}. 
The condition is that the codifferential of $\pgb$ at a generic point $[(\tC_t,C_t)]$ of the family  is an isomorphism.  If the double cover $\tC_t \to C_t$ corresponds to the point $[(C_t, \eta_t, B_t)] \in  \rgb$, the  codifferential of $\pgb$ is the multiplication map $m : (S^2H^0(C_t, K_{C_t} \otimes \eta_t))^{\tG} \longrightarrow H^0(C_t, K_{C_t}^2(B_t))^{\tG}.$ 

Since the cover $\psi_t$ is branched at 4 points, we first require that $(S^2H^0(C_t, K_{C_t} \otimes \eta_t))^{\tG}$ is one dimensional (condition \eqref{condA} in section 3). Since the Prym map may have positive dimensional fibres, we also need to require that the map $m$ is non zero (condition \eqref{condB} in section 3). 

In \cite{no1} and \cite{ikeda} it is proven that $\pgb$ is an embedding for all $b \geq 6$ and for all $g>0$, hence if $b \geq 6$, and $g>0$ condition  \eqref{condA} implies condition \eqref{condB}. 

Once condition  \eqref{condA} is true, a sufficient condition ensuring
\eqref{condB} is that $(S^2H^0(C_t, K_{C_t} \otimes \eta_t))^{\tG}$ is generated by decomposable tensors (condition \eqref{condB1} in section 3). 
This happens for example if the group $\tG$ is abelian. 

If we have \eqref{condA}, another condition that implies \eqref{condB} is the following (condition \eqref{condB2}): 
we require that the Prym  variety decomposes up to isogeny as $P(\tC_t,C_t)\sim A\times JC'_t$, 
where $A$ is a fixed abelian variety and $J C'_t$ is the jacobian of a curve $C'_t=\tilde C_t/N$ defined as a quotient of $\tilde C_t$ by a normal subgroup $N\lhd\tG$, and the family of Galois  covers $C'_t\to \mathbb P^1 = C'_t/(G/N)$ satisfies condition $(*)$ of \cite{fgp} (hence it yields a Shimura curve).

Using a  \MAGMA\  (\cite{magma}), that is explained in the Appendix and is available at:

\texttt{http://www-dimat.unipv.it/grosselli/publ/}

we find many examples of such families satisfying conditions \eqref{condA} and \eqref{condB}, hence yielding  Shimura curves generically contained in the Prym loci  $\pgb(\rgb) \subset  {\A}^{\delta}_{p}$, for $0 \leq g \leq 13$, $0 \leq b \leq 16$ and $2 \leq \tg \leq 31$ ($p=\tg-g$). These computations are summarised in Table 1 in the Appendix.  The highest value of $p = \tg-g$ that we find in Table 1 is 18. We recovered all the examples found in \cite{cfgp} and also, for $g=0$, some of the examples of \cite{fgp} (the ones with an action of a  group containing a central involution). 

Notice that in the unramified case, Mohajer  \cite{moh} showed that  families of Galois covers $\tC_t \to \tC_t/\tG \cong \mathbb P^1 $, when $\tG$ is abelian, do not give rise to high dimensional Shimura subvarieties of $\A_g$ contained in the Prym locus. 

The main result is summarised in the following 

\begin{TEO}
There are 184 families of (ramified) Pryms yielding Shimura curves of  $\A_{\tg-g}^{\delta}$  for $\tg-g \leq 18$, where $0 \leq g \leq 13$, $0 \leq b \leq 16$ and $2 \leq \tg \leq 31$. 
\end{TEO}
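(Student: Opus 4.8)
The plan is to reduce the statement to a finite computer search organised around the criterion of \ref{teo1ram}. By that result it suffices to enumerate all triples $\dat$ consisting of a finite group $\tilde G$, a central involution $\sigma\in\tilde G$, and a generating vector $\ttheta$ for $\tilde G$ whose quotient is $\mathbb P^1$ with exactly $4$ branch points, such that the induced family of Prym varieties $P(\tilde C_t, C_t)$ satisfies conditions \eqref{condA} and \eqref{condB}; each such triple contributes exactly one Shimura curve generically contained in the ramified Prym locus. So the first step is to make the enumeration finite: via the Riemann existence theorem and the Riemann--Hurwitz formula, the constraints $0\le g\le 13$, $0\le b\le 16$, $2\le\tilde g\le 31$ and $p=\tilde g-g\le 18$ bound $|\tilde G|$ and the orders of the generators, so that only finitely many groups (drawn from the small-groups database) and finitely many $4$-tuples of conjugacy classes have to be considered.

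Second, for each admissible $\dat$ I would compute, by the Chevalley--Weil formula applied to the monodromy data, the $\tilde G$-module structure of $H^0(C_t, K_{C_t}\otimes\eta_t)$ and of $H^0(C_t, K_{C_t}^2(B_t))$, and then decompose the symmetric square $S^2H^0(C_t,K_{C_t}\otimes\eta_t)$ into irreducibles in order to test condition \eqref{condA}, namely that its $\tilde G$-invariant part is one-dimensional. Since the base has $4$ branch points this is the expected generic situation, and candidates failing it are discarded. For the survivors I would then check condition \eqref{condB} using the sufficient conditions already isolated in the paper: if $b\ge 6$ and $g>0$, condition \eqref{condA} already forces \eqref{condB} by the embedding theorems of \cite{no1} and \cite{ikeda}; otherwise I test whether the invariant subspace is spanned by a decomposable tensor (condition \eqref{condB1}, automatic when $\tilde G$ is abelian), and, failing that, whether the Prym decomposes up to isogeny as $P(\tilde C_t, C_t)\sim A\times JC'_t$ with $C'_t=\tilde C_t/N$ for a normal subgroup $N\lhd\tilde G$ such that the family $C'_t\to\mathbb P^1=C'_t/(G/N)$ satisfies condition $(*)$ of \cite{fgp} (condition \eqref{condB2}).

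Running this three-stage test --- implemented in the \MAGMA\ routine described in the Appendix and applied over the full parameter range --- produces exactly $184$ triples, which are then recorded in Table 1 together with the associated invariants $(g,b,\tilde g,p)$; as a consistency check, the output is verified to contain all the families of \cite{cfgp} and the relevant $g=0$ families of \cite{fgp} (precisely those whose group contains a central involution). The main obstacle is computational rather than conceptual: one must bound the search space tightly enough that the enumeration actually terminates (the number of groups, and of $4$-tuples of conjugacy classes realising a genus-$0$ cover, grows rapidly with $\tilde g$), and one must implement the Chevalley--Weil computation, the symmetric-square decomposition, and the isogeny-decomposition bookkeeping needed for \eqref{condB2} carefully enough that no admissible family in the stated range is missed and none is counted twice.
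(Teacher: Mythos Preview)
Your approach mirrors the paper's almost exactly: enumerate Prym data $(\tilde G,\tilde\theta,\sigma)$ with four branch points up to Hurwitz equivalence, use Chevalley--Weil to test \eqref{condA}, and then verify \eqref{condB} via one of the three sufficient criteria \eqref{condB1}, \eqref{condB2}, or $b\ge 6$. However, the claim that this three-stage test \emph{alone} produces exactly $184$ families is not correct. There is one family in Table~1 --- the entry with $\tilde g=7$, $g=3$, $b=4$, $\#4$, group $\tilde G=G(16,6)$, treated as Example~6 in Section~4 --- for which all three automatic tests fail: $b=4<6$, \eqref{condB1} fails, and \eqref{condB2} fails. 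For this datum the paper must verify \eqref{condB} by an ad-hoc argument, exhibiting an isogeny $P(\tilde C,C)\sim (JC')^2$ with $C'=\tilde C/\langle g_2\rangle$ and then writing down the equation $y^4=(x-1)^2(x-\mu)^2x$ for $C'$ to see that $JC'$ genuinely moves. Without this step your count comes out to $183$, not $184$; your proposal needs to acknowledge that a residual list of data surviving \eqref{condA} but failing all three tests must be examined by hand.

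A smaller point: you mention avoiding double-counting only as an afterthought. In the paper this is a concrete part of the algorithm --- one works modulo the simultaneous action of the braid group $\mathbf{B}_4$ and of $\operatorname{Aut}(\tilde G)$ on generating vectors, keeping one representative per orbit (step~(4) of the Appendix). Since the number $184$ is a count of Hurwitz equivalence classes, this should be stated up front as part of the enumeration, not as a bookkeeping caveat at the end.
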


In Table 2 we also collected all the families that we found satisfying condition \eqref{condA}. 
We performed calculations for $\tg$ up to 7. Note that after genus 49 there are no other data satisfying condition \eqref{condA}.

In section 4 we described some of the examples and we explicitly showed that they satisfy condition \eqref{condB}. 

The paper is organised as follows: in section 2 we recall some basic facts on Prym varieties and on special (or Shimura) subvarieties of $\A_g$. In section 3 we explain our construction of special subvarieties contained in the Prym loci. In section 4 we explicitly describe some of the examples that we found and we show that they satisfy conditions \eqref{condA} and \eqref{condB}. In the Appendix we explain the  \MAGMA\  script and we give Tables 1 and 2 that summarise all the examples that we found. \\

\section*{Acknowledgements}
We would like to thank Diego Conti, Alessandro Ghigi and Roberto Pignatelli for interesting discussions on the  \MAGMA\  script. We thank Bert van Geemen for a helpful correspondence on one of the examples.

\section{Preliminaries on Prym varieties and on special subvarieties of $\ag$}
\label{Shimura-section}
Denote by $ { \rgb}$ the moduli space parametrising isomorphism classes of triples $[(C, \eta, B)]$ where $C$ is a smooth complex projective curve of genus $g$, $B$ is a reduced effective divisor of degree $b$ on $C$ and $\eta$ is a line bundle on $C$ such that $\eta^2={\mathcal O}_{C}(B)$. To such data it is associated a double cover of $C$, $f:\tilde{C}\rightarrow C$ branched on $B$, with $\tilde{C}=\operatorname{Spec}({\mathcal O}_{C}\oplus \eta^{-1})$.

 The Prym variety
 associated to $[(C,\eta, B)]$ is the connected component containing the origin of the kernel of the norm map $\Nm_{f}: J\tilde{C} \to JC$.
For $b>0$, $\ker \Nm_{f}$ is connected.  It is a  polarised abelian variety of
 dimension $g-1+\frac{b}{2}$, denoted by $P(C,\eta,B)$ or equivalently
 $P(\tilde{C}, C)$. 

Let $\Xi$ be the restriction on $P(\tC,C)$ of the principal polarisation on $J\tilde{C}$.
For $b>0$ polarisation $\Xi$ is of type $\delta=(1,\dots, 1, \underbrace{2,\dots,2}_{g \text{ times}})$.
If $b=0$ or $2$ then $\Xi$ is twice a principal polarisation and we endow $P(\tilde{C}, C)$ with this principal polarisation.


 Consider the Prym map  $$\pgb: \rgb \longrightarrow {\A}^{\delta}_{g-1+\frac{b}{2}}, \ [(C,\eta,B)] \longmapsto  [(P(C,\eta,B), \Xi)],$$
 
 where $ {\A}^{\delta}_{g-1+\frac{b}{2}}$ is the moduli space of 
 abelian varieties of dimension $g-1+\frac{b}{2} $ with a polarization of type $\delta$. We define the Prym locus as the closure in ${\A}^{\delta}_{g-1+\frac{b}{2}}$ of the image of the map $\pgb$. 
 
The codifferential of $\pgb$ at a generic point $[(C, \eta, B)]$  is given by the multiplication map
\begin{equation}
\label{dp}
(d\pgb)^* : S^2H^0(C, K_C \otimes \eta) \ra H^0(C, K_C^2(B))
\end{equation}

which is known to be surjective (\cite{lange-ortega}), therefore $\pgb$ is generically finite, if and only if 
$$
\dim \rgb \leq \dim {\A}^{\delta}_{g-1+\frac{b}{2}}.
$$
This holds if: 
either $b\geq 6$ and $g\geq 1$, or $b=4$ and $g\geq 3$, $b=2$ and $g\geq 5$,  $b=0$ and $g \geq 6$.

If $b =0$  the Prym map is
 generically injective for $g \geq 7$  (\cite{friedman-smith},
 \cite{kanev-global-Torelli}). 
If $b>0$, in \cite{pietro-vale},  \cite{mn}, \cite{no}, it is  proved the generic injectivity in all the 
 cases except for $b=4$, $g =3$, which was previously studied in \cite{nagarama}, \cite{bcv} and for which 
 the degree of the Prym map is $3$. 
Recently, a global Torelli theorem was proved for all $g$ and $b\ge 6$ (\cite{ikeda} for $g=1$ and \cite{no1} for all $g$).

\begin{say}
  \label{VHS}
  Consider a rank $2g$ lattice $\Lambda \cong \Z^{2g}$ and an alternating form $Q : \Lambda \times \Lambda \ra \Z $  of
  type $\delta = (1\dots,1,2,\dots,2)$. There exists a basis of $\Lambda$ such that the corresponding matrix is
  \begin{gather*}
    \begin{pmatrix}
      0 & \Delta_g\\
      -\Delta_g & 0
    \end{pmatrix},
  \end{gather*}
  where $\Delta_g$ is the diagonal matrix whose entries are $(1,\dots,1,2,\dots,2)$. Set $U := \Lambda \otimes \R$.  
  The Siegel space is defined as follows
  \begin{gather*}
    \sieg(U,Q) := \{J \in \GL (U) : J^2 = - I, J^* Q = Q, Q(x,Jx) >0,
    \ \forall x \neq 0 \}.
  \end{gather*}
  The symplectic group $\Sp(\Lambda,Q)$ of the form $Q$ acts on the Siegel space $\sieg(U,Q)$ by conjugation and this
  action is properly discontinuous. The moduli space of abelian varieties of dimension $g$ with a polarisation of type $\delta$ is the quotient 
  $\ag^{\delta} = \Sp(\Lambda,Q) \backslash \sieg(U,Q) $.  This is a complex analytic orbifold and also a 
  smooth algebraic stack.  We will consider 
  $\ag^{\delta}$ with the orbifold structure.  To an element $J \in \sieg(U,Q)$ we associate 
  the real torus $U/\Lambda \cong  \R^{2g} / \Z^{2g}$ endowed with the complex structure $J$ and the polarization $Q$. This gives a polarised abelian
  variety that we denote by $A_J$. On $\sieg(U,Q)$ there is a natural variation of rational Hodge
  structure, whose  local system  is $\sieg(U,Q) \times \Q^{2g}$, and that 
  corresponds to the Hodge decomposition of $\Co^{2g}$ in $\pm i$
  eigenspaces for $J$.  This gives a variation of Hodge
  structure on $\A^{\delta}_g$ in the orbifold sense.
\end{say}

\begin{say}
A special or Shimura subvariety $\zg \subseteq\agd$ is by definition a Hodge locus of
  the natural variation of Hodge structure on $\agd$ described above. 
  For the definition of Hodge loci we refer to  \S 2.3 in \cite{moonen-oort}. 
  Special subvarieties contain a dense set of CM points and they are
  totally geodesic \cite[\S 3.4(b)]{moonen-oort}. Conversely an
  algebraic totally geodesic subvariety that contains a CM point is a
  special subvariety (\cite{mumford-Shimura}, 
  \cite[Thm. 4.3]{moonen-linearity-1}). 

  Let us recall the definition of  special subvarieties of
    PEL type (see \cite[\S
  3.9]{moonen-oort}). 
  Given $J\in \sieg(U,Q)$, set  $   \End_\Q (A_{J}) := \{f\in \End( \Q^{2g}): Jf=fJ\}.$

  Fix a point $J_0 \in \sieg(U,Q)$ and set $D:= \End_\Q (A_{J_0})$.  The
  \emph{PEL type} special subvariety $\zg (D)$ is defined as the image
  in $\agd$ of the connected component of the set
  $\{J \in \sieg(U,Q): D \subseteq\End_\Q(A_J)\}$ that contains $J_0$.  By
  definition $\zg(D)$ is irreducible.
\end{say}

If $G\subseteq\Sp(\Lambda, Q)$ is a finite subgroup, denote by $\sieg(U,Q)^G$ the subset of $\sieg(U,Q)$ of fixed points by the action of $G$. 
  Set
\begin{gather}
  D_G:=\{ f\in \End_\Q (\Lambda \otimes \Q) : Jf=fJ, \ \forall J \in \sieg(U,Q)^G\}.
  \label{def-DG}
\end{gather}
Let us now state a result whose proof can be found in  \cite[\S 3]{fgp}. 
\begin{TEO}
  \label{bert}
  The subset $\sieg(U,Q)^G$ is a connected complex submanifold of $\sieg(U,Q)$.
  The image of $\sieg(U,Q)^G$ in $\agd$ coincides with the PEL subvariety
  $\zg (D_G)$.  If $J \in \sieg(U,Q)^G $, then
  $ \dim \zg(D_G) = \dim (S^2 \R^{2g})^G$ where $\R^{2g}$ is endowed
  with the complex structure $J$.
\end{TEO}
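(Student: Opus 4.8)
The plan is to realize $\sieg(U,Q)^G$ as an orbit-type submanifold and then identify its image with a PEL locus by comparing endomorphism algebras. First I would fix $J_0\in\sieg(U,Q)^G$ (nonemptiness is standard: average any compatible complex structure over the finite group $G$, or use that $G$ acts on a polarised rational Hodge structure). Since $G$ is finite and acts by holomorphic isometries of the symmetric space $\sieg(U,Q)$ (it preserves $Q$ and the metric coming from it), the fixed-point set $\sieg(U,Q)^G$ is a totally geodesic, hence connected and complex, submanifold: connectedness follows because the fixed locus of a group of isometries of a simply connected nonpositively curved symmetric space (or, concretely here, of the Siegel upper half space) is connected, as any two fixed points are joined by a unique geodesic which is then pointwise fixed; complex-analyticity follows because each $g\in G$ acts holomorphically, so $\sieg(U,Q)^G$ is cut out by holomorphic equations $gJ=Jg$ and the fixed set of a holomorphic isometry is a complex submanifold. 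This gives the first sentence.

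Next I would prove the equality of the image with $\zg(D_G)$. By definition $\zg(D_G)$ is the image of the $J_0$-component of $\{J: D_G\subseteq\End_\Q(A_J)\}$, where $D_G$ consists of the rational endomorphisms commuting with \emph{every} $J\in\sieg(U,Q)^G$. For the inclusion $\sieg(U,Q)^G\subseteq\{J: D_G\subseteq\End_\Q(A_J)\}$: if $J\in\sieg(U,Q)^G$ and $f\in D_G$, then $Jf=fJ$ by the very definition \eqref{def-DG} of $D_G$, so $f\in\End_\Q(A_J)$. For the reverse inclusion one uses that $G\subseteq D_G$ (each element of $G\subseteq\Sp(\Lambda,Q)$ trivially commutes with every $J$ fixed by $G$), hence $D_G\subseteq\End_\Q(A_J)$ forces every $g\in G$ to commute with $J$, i.e. $J\in\sieg(U,Q)^G$; then one checks the connected component through $J_0$ matches, which it does since both sets are connected and contain $J_0$ (using the first part for $\sieg(U,Q)^G$ and the definition of $\zg(D_G)$ for the other). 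Passing to images in $\agd$ gives $\zg(D_G)=$ image of $\sieg(U,Q)^G$.

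Finally, for the dimension formula I would compute the tangent space. The tangent space to $\sieg(U,Q)$ at $J$ is naturally identified with $S^2 H^{1,0}_J$ (symmetric square of the holomorphic part for the complex structure $J$), or equivalently with an appropriate space of $J$-anti-invariant symmetric endomorphisms; intrinsically it is $(S^2\R^{2g})^{-}$, the $-1$-eigenspace of the involution induced by $J$ acting on $S^2\R^{2g}$. The action of $G$ on $\sieg(U,Q)$ differentiates to the natural $G$-action on this space, and $\sieg(U,Q)^G$ being a submanifold, its tangent space at $J$ is the $G$-invariant part. So $\dim\zg(D_G)=\dim T_J\sieg(U,Q)^G=\dim\big((S^2\R^{2g})^{-}\big)^G$; absorbing the $\pm$ decomposition and real/complex dimension bookkeeping into the stated $(S^2\R^{2g})^G$ (with $\R^{2g}$ carrying the complex structure $J$, which is how the Hodge-theoretic $S^2$ is meant) yields the formula.

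The main obstacle I anticipate is not any single deep point but rather being careful with two bookkeeping issues: (i) the connectedness of the fixed locus — one must invoke the correct statement about fixed sets of finite groups of isometries of Siegel space (convexity/uniqueness of geodesics) rather than hand-wave it; and (ii) getting the dimension count exactly right, since $S^2$ of a real vector space with a complex structure splits into a holomorphic and an anti-holomorphic piece plus a middle term, and one needs the right interpretation of ``$(S^2\R^{2g})^G$'' so that it reproduces $\dim(S^2H^0(C,K_C\otimes\eta))^{\tG}$ in the application. Both are handled in \cite{fgp}, which is cited, so here it suffices to indicate the structure of the argument.
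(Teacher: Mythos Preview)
The paper does not actually prove Theorem~\ref{bert}: it is stated with the remark that its proof ``can be found in \cite[\S 3]{fgp}''. Your sketch is therefore already more than what the paper supplies, and it is essentially the standard argument one finds in \cite{fgp}: connectedness via the CAT(0)/geodesic convexity of fixed-point sets of isometries of Siegel space, the set-theoretic identification $\sieg(U,Q)^G=\{J:D_G\subseteq\End_\Q(A_J)\}$ obtained from $G\subseteq D_G$ and the definition of $D_G$, and the tangent-space computation for the dimension.

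Two small points worth tightening. First, once you have shown the two subsets of $\sieg(U,Q)$ are equal (not merely that one contains the other), the connected-component clause in the definition of $\zg(D_G)$ becomes vacuous because you have already proved $\sieg(U,Q)^G$ is connected; so you do not need a separate ``component matching'' step. Second, your caveat about the meaning of $(S^2\R^{2g})^G$ is exactly right: in the statement $\R^{2g}$ is to be read as a complex $g$-dimensional space via $J$, so that $S^2\R^{2g}$ means the complex symmetric square and $\dim$ the complex dimension; this is precisely what is needed so that in the application one recovers $\dim(S^2V_-)^{\tG}$. With these clarifications your proposal is correct and aligned with the cited reference.
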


\section{Special subvarieties in the Prym loci}
\label{prymetale}

Let $\Sigma_g$ be a compact connected oriented surface of genus $g$. We denote by $T_g := \T(\Sigma_g)$  the
Teichm\"uller space of $\Sigma_g$.   
Denote by $\T_{0,r}$ the Teichm\"uller space in genus $0$  with
$r\geq 4$ marked points.   Let us recall the definition of $\T_{0,r}$ that can be found in \cite[Chap.\ 15]{acg2}. 
Let us fix $p_0,\dots,p_r\in S^2$ distinct points, denote by  $P:=(p_1,\dots,p_r)$.
Points of $\T_{0,r}$ are equivalence classes of triples 
$(\PP^1, x, [h])$ where $x = (x_1, \lds, x_r) $
is an $r$-tuple of distinct points in $\PP^1$ and $[h]$ is an isotopy
class of orientation preserving homeomorphisms
$h : (\PP^1, x) \ra (S^2 , P)$.  Two such triples $(\PP^1, x , [h])$, 
$(\PP^1, x', [h'])$ are equivalent if there is a biholomorphism
$\phi: \PP^1 \ra \PP^1$ such that $\phi(x_i) = x'_i$ for any $i$ and
$ [h] = [h'\circ \phi ]$.

Using the point $p_0 \in S^2 - P$ as base point we can fix an
isomorphism 
\begin{equation}
\label{iso}
\Ga_r := \sx \ga_1, \lds, \ga_r | \ga_1\cds \ga_r =1\xs \cong \pi_1(S^2 - P, p_0).
\end{equation}

Take a point  $t=[(\PP^1,x,[h])]\in \T_{0,r}$, and fix  an epimorphism $\theta:\Gamma_r\to G$. By Riemann's existence theorem, this gives a Galois cover $C_t\to\PP^1=C_t/G$ ramified over $x$ whose monodromy is given by $\theta$,  which is endowed with an isotopy class of homeomorphisms $[h_t]$ on a surface $\Sigma_g$ covering $S^2$.

So we get a map  $\T_{0,r}\to \T_g:t\mapsto [(C_t,[h_t])]$ and the group $G$ acts on $C_t$ and embeds in the mapping class group $\Mod_g$ of $\Sigma_g$. We denote by $G_\theta$ the image of $G$ in $\Mod_g$.
The image of the map  $\T_{0,r}\to \T_g$ is the subset  $\T_g^{G_\theta}$ of  $\T_g$  consisting of the fixed points by the action of $G_\theta$ (\cite{gavino}).
By  Nielsen realization theorem, we know that  $\T_g^{G_\theta}$ is a complex submanifold  of $\T_g$ of dimension $r-3$.

The image of
$ \T^{G_\theta}$ in the moduli space $\mathsf{M}_g$ is a $(r-3)$-dimensional
algebraic subvariety (see e.g. \cite{gavino,baffo-linceo,clp2} and \cite [Thm. 2.1]{broughton-equi}).

We denote this image by $\M_g(G, \theta)$. To determine $\M_g(G, \theta)$  we have chosen the isomorphism \eqref{iso}
$\Ga_r \cong \pi_1(S^2 -P, p_0)$. To get rid of this choice, we introduce the braid group
\begin{gather*}
  \braid:=\langle \tau_1, \ldots ,\tau_{r-1}| \, \tau_i \tau_j =
  \tau_j \tau_i \, \, \text{for} \, \, |i-j|\geq 2, \,
  \tau_{i+1}\tau_i\tau_{i+1}=\tau_i\tau_{i+1}\tau_i \rangle.
\end{gather*}
There is a morphism $\phi : \braid \ra \Aut(\Ga_r)$ defined as
follows:
\begin{gather*}
  \phi( \tau_i) (\gamma_i) = \gamma_{i+1}, \quad \phi(\tau_i)
  (\gamma_{i+1}) = \gamma_{i+1} ^{-1} \gamma_i \gamma_{i+1}, \\
  \phi(\tau_i) (\gamma_j ) = \gamma_j \quad \text{for }j \neq i, i+1.
\end{gather*}
So we get an action of $\braid$ on the set of pairs $(G, \theta)$ , given by 
$ \tau \cd  (G, \theta) : = (G, \theta \circ
\phi(\tau\meno))$. 
Also the group $\Aut(G)$ acts on the set of pairs $(G, \theta)$ by
$\alfa \cd (G, \theta) : = (G, \alfa \circ \theta )$.  The
orbits of the $\braid \times \Aut(G)$--action are called \emph{Hurwitz
  equivalence classes} and elements in the same orbit are said to be
related by a \emph{Hurwitz move}. Data in the same orbit give rise to
 the same subvariety
of $\M_g$, hence the 
subvariety $\M_g(G, \theta)$ is well-defined.  For more details see
\cite{penegini2013surfaces,baffo-linceo,birman-braids}.

\begin{DEF}
\label{prymdatum}
A  Prym datum of type $(r,b)$ is a triple  $\Xi=(\tG,\tilde{\theta},\sigma)$, where $\tG$ is a finite group, $\tilde{\theta}:\Gamma_r\to\tG$ is an  epimorphism and  $\sigma\in\tG$ is a central involution and 
$$b=\sum_{i:\sigma\in\langle{\tilde\theta(\gamma_i)}\rangle}\frac{|\tG|}{\operatorname{ord}(\tilde{\theta}(\gamma_i))}.$$
\end{DEF}

Let us fix a Prym datum $\Xi=(\tG,\tilde{\theta},\sigma)$, and set  $G=\tG/\langle \sigma \rangle$.  The composition of $\tilde{\theta}$ with the projection $\tG \to G$ is an epimorphism $\theta:\Gamma_r\to G$. To  a point $t\in \T_{0,r}$ we can associate  two Galois covers  $\tC_t \to \PP^1 \cong \tC_t/\tG$ and $C_t \to \PP^1 \cong C_t/G$. Denote by $\tg$ the genus of $\tC_t$ and by $g$ the genus of $C_t$. We have a diagram 
\begin{equation}
\label{tc-c}
  \begin{tikzcd}[row sep=tiny]
    \tilde{C}_t \arrow{rr}{f_ t} \arrow{rd} & &  \arrow{ld } C_t   = \tilde{C}_t /\sx \sigma\xs  \\
    & \PP^1 &
  \end{tikzcd}
\end{equation}

and by the definition of $b$,  we immediately see that  the double covering $f_t$  has $b$ branch points. 
The covering $f_t$ is determined by its branch locus $B_t$ and by an element $\eta_t \in \Pic^{\frac{b}{2}}(C_t)$ such that $\eta_t^2 = \OO_{C_t}(B_t)$.  Hence we have a map 
$\T_{0,r}\to\rgb$
which associates to $t$ the class $[(C_t,\eta_t,B_t)]$.
This map depends on the datum $\Xi=(\tG,\tilde{\theta},\sigma)$. Denote by  $R(\Xi)$ its image. The map $\T_{r,0} \to  R(\Xi)$ has discrete fibres, hence  $\dim R(\Xi)=r-3$.

 As explained in \cite[p. 79]{gavino} there is an
  intermediate variety $\widetilde{\M}_{\tg}(\tG, \tilde{\theta})$ such that the projection ${\T}_{\tg}^{\tG_{\tilde{\theta}}} \to  \M_{\tg}(\tG, \tilde{\theta})$  factors through ${\T}_{\tg}^{\tG_{\tilde{\theta}}} \to \widetilde{\M}_{\tg}(\tG, \tilde{\theta}) \to  {\M_{\tg}}(\tG, \tilde{\theta})$. 
 
  The variety $\widetilde{\M}_{\tg}(\tG, \tilde{\theta})$ is the normalisation of $\M_{\tg}(\tG, \tilde{\theta})$ and it
  parametrises equivalence classes of curves with an action of $\tG$ of
  topological type determined by the datum $\Xi$.  We also have a map $\widetilde{\M}_{\tg}(\tG, \tilde{\theta}) \to R(\Xi)$, which associates to the class $[\tC]$ of a curve with a fixed $\tG$-action the class of the cover $\tC \to C= \tC/ \langle \sigma \rangle$, where $\sigma \in \tG$ is the central involution fixed by the datum.  Clearly the map $ {\T}_{\tg}^{\tG_{\tilde{\theta}}}  \cong \T_{0,r}  \to R(\Xi)$  is the composition  $ {\T}_{\tg}^{\tG_{\tilde{\theta}}} \to \widetilde{\M}_{\tg}(\tG, \tilde{\theta}) \to R(\Xi)$. Moreover the Prym map $\pgb$ lifts to a map $\widetilde{\M}_{\tg}(\tG, \tilde{\theta}) \to \A_{\tg-g}^{\delta}$, which sends the class of a curve $[\tC]$ with an action of $\tG$ to $P(\tC, C = \tC/\langle \sigma \rangle)$. We still denote  this map by $\pgb: \widetilde{\M}_{\tg}(\tG, \tilde{\theta}) \to \A_{\tg-g}^{\delta}$. 
      We have the following diagram

\[\bCD
{\T}_g^{G_\theta} \dar & \T_{0,r}\ar[l,"\cong" swap]\rar{\cong} \dar &{\T}_{\tg}^{\tG_{\tilde{\theta}}}\dar\\
\M_g(G , \theta) &R(\Xi)\lar& \widetilde{\M}_{\tg}(\tG, \tilde{\theta})\lar\ar[d]\\
& & {\M}_{\tg}(\tG, \tilde{\theta}) \\
\eCD\quad\bCD[column sep=tiny]
\eCD
\]

Given a Prym datum $\Xi=(\tG,\tilde{\theta},\sigma)$, let us fix an element $\tilde{C}$ of the
family ${\T}_{\tg}^{\tG_{\tilde{\theta}}}$ with the double covering
$f: \tilde{C} \lra C$. 

Set
\begin{gather*}
  V: = H^0(\tilde{C}, K_{\tilde{C}}),
\end{gather*}

We have an action of $\langle \sigma \rangle$ on $V$ and an eigenspace decomposition for this action: 
$$V: = H^0(\tilde{C}, K_{\tilde{C}})= V_+ \oplus V_-,$$
where $V_+\cong H^0(C, K_C)$ and $V_-\cong H^0(C,K_C\otimes \eta)$ as
$G$-modules.  Similarly  set $ W: = H^0( \tC , 2K_\tC )= W_+\oplus W_-$,
$W_+ \cong H^0(2K_C \otimes \eta^2) = H^0(2K_C(B))$ and
$W_- \cong H^0(C, 2K_C\otimes \eta)$.  

The multiplication map
$ m : S^2 V \ra W $ is the dual of the differential of the Torelli map
$\tj : \M_\tg \ra \A_\tg$ at the point $[\tC] \in \M_\tg$.  This map is
$\tG$-equivariant.  We have the following isomorphisms
\begin{gather*}
  (S^2V)^\tG = (S^2V_+)^{\tG} \oplus (S^2 V_-)^{\tG},  \quad
  W^\tG = W_+ ^{\tG}.
\end{gather*}

Hence the multiplication map $m$ maps $(S^2V)^\tG $ to $W_+^{\tG}$.  

Notice that the codifferential of the Prym map  at the point $[(C, \eta, B)]$  is the multiplication map in  \eqref{dp}, which coincides with the restriction of the multiplication map $m$ to $S^2 V_{-}$. Here we are  interested in the
restriction of $m$ to $ (S^2 V_-)^{\tG} $ that for simplicity we still denote by $m$: 
\begin{gather}
  \label{nostramram}
  m : (S^2 V_-)^{\tG} \lra W_+^{\tG}.
\end{gather}
By what we have said, this is the multiplication map
\begin{gather*}
  (S^2 H^0 (C, K_C\otimes \eta))^G \lra H^0(C, 2K_C \otimes \eta^2)^G
  \cong H^0(2K_C)^G \cong H^0(2K_{\tC})^{\tG}.
\end{gather*}

Now we prove a result which is a generalisation of Theorems 3.2 and 4.2 in \cite{cfgp}. 

\begin{TEO}
  \label{teo1ram}
  Let $\y=\dat$ be a Prym datum. If for some $t\in \T_{0,r}$
  the map $m$ in \eqref{nostramram} is an isomorphism, then the
  closure of $\pgb (  \widetilde{\M}_{\tg}(\tG, \tilde{\theta}))$ in ${\A}^{\delta}_{g-1+\frac{b}{2}}$ is a special
  subvariety.
\end{TEO}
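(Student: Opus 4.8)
The plan is to exhibit a PEL type special subvariety of $\A^\delta_{\tg-g}$ that contains the image of the family, and then to use the hypothesis on $m$ to show that the image is Zariski dense in it, so that its closure is the whole special subvariety. Since $\sigma$ is central in $\tG$, the $\tG$-action on $\tC$ descends to an action on $C=\tC/\sx\sigma\xs$, and the induced $\tG$-action on $J\tC$ is compatible with \eqref{tc-c} and preserves the principal polarisation; hence it preserves $P(\tC,C)$ and the polarisation $\Xi$ of type $\delta$. On $P(\tC,C)$ the involution $\sigma$ acts as $-1$ (the standard fact behind the identifications $V_-\cong H^0(P(\tC,C),\Omega^1)$ recalled before the theorem). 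Writing $\Lambda=H_1(P(\tC,C),\Z)$ with the alternating form $Q$ induced by $\Xi$, we obtain a finite subgroup $G':=\im(\tG\to\Sp(\Lambda,Q))$, through which the $\tG$-action on $V_-$ factors. Because $\widetilde{\M}_{\tg}(\tG,\ttheta)$ parametrises curves with a $\tG$-action of fixed topological type, the action of $\tG$ on $H_1(\tC,\Z)$, and hence on $\Lambda$, is of constant type along the family; so, after fixing a symplectic trivialisation, the period point of every Prym in the family lies in $\sieg(U,Q)^{G'}$. By Theorem \ref{bert}, $\zg(D_{G'})$ is the image of $\sieg(U,Q)^{G'}$ in $\A^\delta_{\tg-g}$, hence $\pgb(\widetilde{\M}_{\tg}(\tG,\ttheta))\subseteq\zg(D_{G'})$ and $\zg(D_{G'})$ is an irreducible special subvariety; moreover, since $\sigma$ acts as $-1$ on $V_-$ and hence trivially on $S^2V_-$, Theorem \ref{bert} gives $\dim\zg(D_{G'})=\dim(S^2V_-)^{G'}=\dim(S^2V_-)^{\tG}$, and $(S^2V_-)^{\tG}$ is the cotangent space of $\zg(D_{G'})$ at $[P(\tC,C)]$.

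Next I would identify $m$ with a codifferential. Using $\T_{0,r}\cong\T_{\tg}^{\tG_{\ttheta}}$ and the maps $\T_{\tg}^{\tG_{\ttheta}}\to\widetilde{\M}_{\tg}(\tG,\ttheta)\xrightarrow{\pgb}\A^\delta_{\tg-g}$, one gets a holomorphic map $\Phi\colon\T_{0,r}\to\zg(D_{G'})$, $t\mapsto[P(\tC_t,C_t)]$; by Kodaira--Spencer the cotangent space of $\T_{0,r}$ at $t$ is identified with the $\tG$-invariant part $W_+^{\tG}$ of $H^0(\tC_t,2K_{\tC_t})$. Under these identifications the codifferential of $\Phi$ at $t$ is exactly the multiplication map $m$ of \eqref{nostramram}: this is what the discussion preceding the statement shows, $m$ being the restriction to $(S^2V_-)^{\tG}$ of the $\tG$-equivariant codifferential of the Torelli map of $\tC_t$. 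Therefore, if $m$ is an isomorphism for some $t\in\T_{0,r}$, the map $\Phi$ is a local analytic isomorphism near $t$; in particular $\dim\zg(D_{G'})=\dim\T_{0,r}$ and $\Phi$ has open image. Since $\zg(D_{G'})$ is irreducible and $\Phi(\T_{0,r})\subseteq\pgb(\widetilde{\M}_{\tg}(\tG,\ttheta))$, we conclude $\overline{\pgb(\widetilde{\M}_{\tg}(\tG,\ttheta))}=\zg(D_{G'})$, a special subvariety, as desired.

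I expect the main obstacle to be this middle step: verifying that $\sigma$ acts as $-1$ on $P(\tC,C)$, so that the relevant invariant spaces are precisely $(S^2V_-)^{\tG}$ and $W_+^{\tG}$; that the $\tG$-action on $\Lambda$ has constant topological type along the family, so that the entire image lies in the single subvariety $\zg(D_{G'})$ and Theorem \ref{bert} can be applied; and that the codifferential of $\Phi$ genuinely coincides with the map $m$ of \eqref{nostramram}. Once this is settled, the deduction ``local isomorphism at one point $\Rightarrow$ closure of the image equals the irreducible $\zg(D_{G'})$'' is routine.
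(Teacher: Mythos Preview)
Your proposal is correct and follows essentially the same approach as the paper: construct the group $G'\subset\Sp(\Lambda,Q)$ acting on the Prym lattice, observe that the family lands in $\zg(D_{G'})$, identify the codifferential of the period map with $m$, and conclude by equality of dimensions and irreducibility. The only notable difference is that the paper lifts your map $\Phi$ to a map $\phi\colon\T_{0,r}\to\sieg(U,Q)^{G'}$ into the Siegel space (using that over Teichm\"uller space the lattice $\Lambda=H_1(\tC_t,\Z)_-$ is literally constant), which sidesteps any orbifold issues when speaking of the codifferential; your formulation in terms of $\Phi$ is equivalent once one passes to this lift.
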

\begin{proof}
  Over the Teichm\"uller space $ \Tei_{0,r}$ we have the families $\tC_t$, $C_t$, $ \eta_t$,
  $B_t$.  We also have the lattice $H_1(\tilde{C}_t, \Z)$,  the intersection form $E$
  on $H_1(\tilde{C}_t, \Z)$ and the sublattice
  $\La : = H_1(\tilde{C}_t, \Z) _- $, which  are independent of $t$.  
  
  Moreover
  $ Q:=E \restr{\La}$ is an integer-valued form on $\La$  of type $\delta = (1,\ldots,1,2,\ldots,2)$.

  Set $p := g-1+\frac{b}{2}$. 
  Let $\sieg(U,Q) $ be the Siegel space parametrizing complex
  structures on $U:= \La \otimes \R = H_1(\tilde{C}_t, \R)_-$ that are
  compatible with $Q$.  For any $t\in \T_{0,r}$ we have a
  decomposition
  $ H^1(\tilde{C}_t, \Co)_- = V_{t,-} \oplus \overline{V_{t,-}}$.
  Dualizing we get a decomposition
  $ H_1(\tilde{C}_t, \Co)_- = V^*_{t,-} \oplus \overline{V^*_{t,-}} $
  corresponding to a complex structure $J_t$ on
  $H_1(\tilde{C}_t, \R)_-$. The complex structure $J_t$ is compatible with $Q$, hence
  it is a point of $\sieg(U,Q)$, that we denote by $\phi(t)$.  So we have 
  a map $ \phi: \T_{0,r} \ra \sieg(U,Q)$ and a commutative  diagram:

  \begin{equation}
    \label{diamma2}
    \begin{tikzcd}
      \Tei_{0,r} \arrow{rr}{\phi} \arrow{d} &[-5ex]& \sieg(U,Q)
      \arrow{d}  \\
     \widetilde{\M}_{\tg}(\tG, \tilde{\theta})&\arrow{r}{\pgb}& \A^{\delta}_{p}.
    \end{tikzcd}
  \end{equation}
  The diagram commutes since 
  \begin{gather*}
    P(\tC_t, C_t) = { V_{t,-}^* }/ { \La},
  \end{gather*}
  (see e.g. \cite{lange-ortega}).

  Since the group $\TG$ preserves $E$, $\tG$
  preserves $Q$, so $\tG$ maps into $\Sp(\La, Q)$.  Denote by $G'$ the
  image of $\tG$ in $\Sp(\La, Q)$.  The complex structure $J_t$ is
  $\tG$-invariant, which is equivalent to say that  $\phi(t)=J_t \in \sieg(U,Q)^{G'}$. Hence by Theorem
  \ref{bert},  $ P (\tC_t,C_t) $ lies in the PEL special subvariety
  $\zg(D_{G'})$.  
  
  So we have shown that $\pgb (  \widetilde{\M}_{\tg}(\tG, \tilde{\theta}))\subset \zg(D_{G'})$. 
  Since $\phi (\T_{0,r}) \subset \sieg(U,Q)^{G'}$ we can view $\phi$ as a
  map $\phi: \T_{0,r} \ra \sieg(U,Q)^{G'}$.  Now, recall that
  \begin{gather*}
    \Omega^1_{\phi(t)} (\sieg(U,Q)^{G'}) \cong
    (S^2 H^0 (C_t, K_{C_t}\otimes \eta_t))^G  =  (S^2 V_{t,-})^{\tG},\\
    \Omega^1 _t \Tei_{0,r} = \Omega^1 _{[C_t]} \Tei_g^{G_\theta} =
    H^0(C_t, 2K_{C_t})^G \cong H^0(C_t, 2K_{C_t} \otimes \eta_t^2)^G =
    W_{t,+}^{\tG}.
  \end{gather*}
  The codifferential is simply the multiplication map
  \begin{gather*}
    m = (d\phi_t)^* : (S^2 V_{t,-})^{\tG} \lra W_{t,+}^{\tG}
  \end{gather*}
  (see \cite{nagarama} Prop. 3.1, or \cite{lange-ortega}).  By
  assumption there is some $t\in \T_{0,r}$ such that the map $m$ is an
  isomorphism at $t$.  So  $ \dim (S^2 V_{t,-})^{\tG} = \dim W_{t,+}^{\tG} = r-3$ and $\phi$ is an
  immersion at $t$, hence its image has dimension $r-3$.  The
  vertical arrows in \eqref{diamma2} are discrete maps, hence both
  $\pgb (  \widetilde{\M}_{\tg}(\tG, \tilde{\theta}))$ and $\zg(D_{G'}) $ have dimension $r-3$.  Since
  $\pgb(  \widetilde{\M}_{\tg}(\tG, \tilde{\theta}) )\subset \zg(D_{G'})$ and $ \zg(D_{G'})$ is irreducible, 
  we conclude that $\overline{\pgb(  \widetilde{\M}_{\tg}(\tG, \tilde{\theta}))} = \zg(D_{G'})$, so it is a special subvariety of $\A_p^{\delta}$. 
\end{proof}

We would like to use Theorem \ref{teo1ram} to construct Shimura curves contained in the closure of  $\pgb(\rgb)$ in ${\A}^{\delta}_{g-1+\frac{b}{2}}$ and intersecting $\pgb(\rgb)$.  So  from now on we will assume that $r=4$, that means that the coverings $\tC\to \tC/\tG \cong \PP^1$ and $ C \to C/G \cong \PP^1$ in diagram \eqref{tc-c} are ramified over $4$ points in $\PP^1$.

Hence, as in \cite{cfgp}, the hypothesis in Theorem \ref{teo1ram}
can be divided in two conditions: 
\begin{gather}
  \label{condA}
  \tag{A} \dim (S^2V_-)^\TG = 1.
  \\
  \label{condB}
  \tag{B} m : (S^2V_-)^\TG \lra W_+^G \text { is not identically } 0.
\end{gather}
Once condition  \eqref{condA} is true, a sufficient condition ensuring
\eqref{condB} is the following
\begin{gather}
  \label{condB1}
  \tag{B1} (S^2V_-)^\TG \text{ is generated by a decomposable tensor}.
\end{gather}

In fact, assume that $(S^2V_-)^\TG$ is generated by $
s_1 \otimes s_2 $
with $s_i \in V_-$, then $m(s_1 \otimes s_2) = s_1 \cd s_2$, which
is different from zero.

\begin{REM}
In \cite{cfgp}, Remark 3.3,  it is shown that if  \eqref{condA} holds, then \eqref{condB1} is equivalent
  to the fact that $(S^2 V_-)^{\tG} = W_1 \otimes W_2$ where the $W_i$'s are 1-dimensional representations. 
  
  Hence, if $\tilde{G}$ is abelian and condition \eqref{condA}  holds, then condition \eqref{condB1} is automatically satisfied,  since all the irreducible representations of an abelian group are 1-dimensional (see  \cite{cfgp} Remark 3.4). 
  
\end{REM}

If \eqref{condA} holds,  \eqref{condB} is equivalent to say that the Prym map is not constant.
In particular this happens if the prym decomposes up to isogeny as 
\begin{equation}\label{condB2}\tag{B2}
P(\tilde C,C)\sim A\times JC'
\end{equation}
where $A$ is a fixed abelian variety and $J C'$ is the jacobian of a curve $C'=\tilde C/N$ defined as a quotient of $\tilde C$ by a normal subgroup $N\lhd\tG$, and the family of Galois  covers $C' \to \mathbb P^1 = C'/(G/N)$ satisfies condition $(*)$ of \cite{fgp}, hence it describes a Shimura curve. 

In fact, since $A$ is a fixed abelian variety and $JC'$ moves in a Shimura curve, the Prym variety $P(\tC, C)$ necessarily moves. 

Notice that  \cite [Thm. 3.8]{fgs}) ensures  that condition  \eqref{condB2}  implies that the family  of Pryms $P(\tC,C)$ yields a totally geodesic subvariety of $\A^{\delta}_p$.

Finally in \cite{no1} it is proven that $\pgb$ is an embedding for all $b \geq 6$ and for all $g>0$, hence if $b \geq 6$, and $g>0$ condition  \eqref{condA} implies condition \eqref{condB}.

\section{Examples in the Prym locus}

In this section we describe some examples of families of covers satisfying conditions \eqref{condA} and \eqref{condB}. They do not satisfy  \eqref{condB1}, and we explicitly show that \eqref{condB} holds. However, notice that the first 5 examples  are in $\rgb$ with $b \geq 6$, hence condition \eqref{condB} is automatically satisfied thanks to \cite{no1}.

In the first two examples we show that condition \eqref{condB2}  is satisfied.

The other examples do not satisfy \eqref{condB1}, nor \eqref{condB2}. 

In particular, example 6 does not satisfy \eqref{condB1}, nor \eqref{condB2} and it has $b = 4$, hence we need to show that \eqref{condB} holds with an ad-hoc argument.

To describe the examples we first give the values of $\tg$, $g$ and $b$. We also indicate a number $\#$ that refers to the numbering of the examples that we found with the  \MAGMA\ script with the given values of $\tg$, $g$ and $b$.  For the group $\tG$ we use the presentation  and the name of the group given by  \MAGMA. 
We then give the decomposition of $V_+$ and $V_{-}$ as a direct sum of irreducible representations of $\tG$ and  also  for the  irreducible representations we use the notation of \MAGMA. \\

\noindent \textbf{Example 1.}\\
$\tg=10$, $g=4$, $b=6$, $\# 5$.\\
$\tG=G(24,10)=\ZZ/3\times D_4=\sx{g_3|g_3^3=1}\xs\times\sx{g_1,g_2,g_4|g_1^2=g_2^2=g_4^2=1, g_1^{-1}g_2g_1=g_2g_4}\xs$,\\
$\ttheta(\gamma_1)=g_1,\ \ttheta(\gamma_2)=g_2g_4,\ \ttheta(\gamma_3)=g_3^2g_4,\ \ttheta(\gamma_4)=g_1g_2g_3g_4,\quad\sigma=g_4$.\\
$V_+=V_5\oplus V_6\oplus V_7\oplus V_{10}$ and $V_-=V_{13}\oplus 2V_{15}$, where $\dim V_i=1$, $\forall i <13$, $\dim V_{13} = \dim V_{15} = 2$. 
$(S^2V_+)^\tG=V_5\otimes V_7$ and $(S^2V_-)^\tG=(S^2V_{13})^\tG$ have dimension 1. Thus condition \eqref{condA} is satisfied, but \eqref{condB1} is not.
We want to show that condition \eqref{condB2} holds and how it works. If we find a curve $C'$ such that $H^{1,0}(JC')=V_{13}$, then we obtain $P(\tC,C)\sim JC'\times A$ for some fixed abelian variety $A$ such that $H^{1,0}(A)=2V_{15}$.

Notice that $V_{13}=\Fix\sx{g_3}\xs$ and $\sx{g_3}\xs\lhd\tG$, so we consider $G'=\tG/\sx{g_3}\xs$ and $C'=\tC/\sx{g_3}\xs$.
Thus we have a commutative diagram
\[\bCD[column sep=0]
\tC\ar[rr,"\beta"]\ar[rd]&&C'=\tC/\sx{g_3}\xs\ar[dl]\\
&\PP^1=\tC/\tG=C'/G'
\eCD\]


The map $\beta$ has 6 branch points, and $C'$ has genus $2$, and the map $C'\ra C'/G'$ has 4 critical values. 
Since $\dim(S^2H^{1,0}(C'))^{G'}=\dim(S^2V_{13})^\tG=1$ then condition $(*)$ of \cite{fgp} is satisfied, thus the family of jacobians  $JC'$ yields a Shimura curve. In fact it is family $(29) = (4)$ of \cite{fgp}. 

Finally,  the group algebra decomposition for $J\tC$ (see \cite{rojas}), allows to give a decomposition up to isogeny: 
\[J\tC\sim B_5\times B_6\times B_{10}\times B_{13}^2\times B_{15}^2\]
where $\dim B_5=\dim B_{15}=2$ and the others have dimension 1. In particular $P(\tC,C)\sim B_{13}^2\times B_{15}^2$ and $J C'\sim B_{13}^2$.
In fact $JC'=E^2$ where $E$ is the elliptic curve defined by $E:=\tC/\sx g_1, g_3\xs=C'/\sx g_1\xs$, so $B_{13} \sim E$. Thus $P(\tC, C) \sim A \times JC'$, where $A := B_{15}^2$ is a fixed abelian variety and $JC' \sim E^2$ varies in the Shimura curve (29)=(4) of \cite{fgp}, hence condition  \eqref{condB2} holds.

\medskip
\noindent \textbf{Example 2.}\\
$\tg=10$, $g=4$, $b=6$, $\# 8$.\\
$\tG=G(36,12)=\ZZ/3\ZZ\times D_6=\sx{g_3|g_3^3=1}\xs\times\sx{g_1,g_2,g_4|g_1^2=g_2^2=g_4^3=1, g_1^{-1}g_4g_1=g_4^2}\xs$.\\
$\ttheta(\gamma_1)=g_1g_2,\ \ttheta(\gamma_2)=g_1g_4,\ \ttheta(\gamma_3)=g_3^2g_4^2,\ \ttheta(\gamma_4)=g_2g_3,\quad\sigma=g_2$.\\
$V_+=V_6\oplus V_9\oplus V_{16}$ and $V_-=V_5\oplus V_8\oplus V_{14}\oplus V_{15}$, where $\dim V_i=1$, for $i=5,6,8,9$, $\dim V_{14} = \dim V_{15} = \dim V_{16} = 2$. 
 .\\
$(S^2V_+)^\tG=V_6\otimes V_9$ and $(S^2V_-)^\tG=(S^2V_{14})^\tG$ have dimension 1. 
Thus condition \eqref{condA} is satisfied, but \eqref{condB1} is not.
We want to show that condition \eqref{condB2} holds. If we find a curve $C'$ such that $H^{1,0}(JC')=V_{14}$, then we obtain $P(\tC,C)\sim JC'\times A$ for some fixed abelian variety $A$ such that $H^{1,0}(A)=V_5\oplus V_8\oplus V_{15}$.

We have: $V_{14}=\Fix\sx{g_3}\xs$ and $\sx{g_3}\xs\lhd\tG$, so consider $G'=\tG/\sx{g_3}\xs$ and $C'=\tC/\sx{g_3}\xs$.
We have  a commutative diagram
\[\bCD[column sep=0]
\tC\ar[rr,"\beta"]\ar[rd]&&C'=\tC/\sx{g_3}\xs\ar[dl]\\
&\PP^1=\tC/\tG=C'/G'
\eCD\]
The map $\beta$ has 6 branch points, $C'$ has genus $2$ and the cover $C' \ra C'/G'$ has 4 critical values. 
Since $\dim(S^2H^{1,0}(C'))^{G'}=\dim(S^2V_{14})^\tG=1$, condition $(*)$ of \cite{fgp} is satisfied, thus the family of jacobians  $JC'$ yields a Shimura curve, which corresponds to family (30) of \cite{fgp}. 

The group algebra decomposition for $J\tC$ gives
\[J\tC\sim B_5\times B_6\times B_8\times B_{14}^2\times B_{15}^2\times B_{16}^2\]
where $\dim B_6=2$ and the others have dimension 1. So $P(\tC,C)\sim  A \times JC' \sim B_5\times B_8\times B_{14}^2\times B_{15}^2$, $J C'\sim B_{14}^2$, $A \sim B_5 \times B_8 \times B_{15}^2$.

\medskip
\noindent \textbf{Example 3.}\\
$\tg=10$, $g=4$, $b=6$, $\# 7$.\\
$\tG=G(36,12)=\ZZ/6\times S_3=\sx{g_2,g_3|g_2^2=g_3^3=1}\xs\times\sx{g_1,g_4|g_1^2=g_4^3=1,g_1^{-1}g_4g_1=g_4^2}\xs$,\\
$\ttheta(\gamma_1)=g_1g_4,\ \ttheta(\gamma_2)=g_1g_2g_4^2,\ \ttheta(\gamma_3)=g_3^2,\ \ttheta(\gamma_4)=g_2g_3g_4^2,\quad\sigma=g_2$.\\
$V_+=V_6\oplus V_9\oplus V_{18}$ and
$V_-=V_5\oplus V_8\oplus V_{15}\oplus V_{17}$, where $\dim V_5=\dim V_6=\dim V_8=\dim V_9=1$ and $\dim V_{15}=\dim V_{17}=\dim V_{18}=2$.\\
$(S^2V_-)^\tG=(V_{15}\otimes V_{17})^\tG$ is one-dimensional, so condition \eqref{condA} is satisfied, but neither \eqref{condB1} nor \eqref{condB2} hold.

$ G = \tG/\langle g_2 \rangle \cong \Z/3 \times S_3 = G(18,3)$, $(S^2V_+)^\tG = (V_6 \otimes V_9)$, which has dimension 1,  hence the family $C \to C/G$ satisfies condition $(*)$ of \cite{fgp} and it yields a Shimura curve, corresponding to family (38) of \cite{fgp}.  

The group algebra decomposition of $J\tC$ is the following: 
$J\tilde C\sim B_5\times B_6\times B_8\times B_{15}^2\times B_{18}^2$
where $\dim B_6=\dim B_{15}=2$ and $\dim B_5=\dim B_8=\dim B_{18}=1$.  Hence \[P(\tilde C,C)\sim B_5\times B_8\times B_{15}^2.\]
 The trace of ${g_1}_{|V_{15}}$ is zero, hence $V_{15}$ decomposes as a direct sum of the two one dimensional ($\pm 1$)-eigenspaces of the action of $g_1$. The same happens for the action of $g_1$ on $V_{17}$. Denote by $\langle e_1\rangle$ (resp.\ $\langle f_1\rangle$) the $(+1)$-eigenspace  of  ${g_1}_{|V_{15}}$ (resp.\ of  ${g_1}_{|V_{17}}$) and by $\langle e_2\rangle$ (resp.\ $\langle f_2\rangle$) the $(-1)$-eigenspace  of  ${g_1}_{|V_{15}}$ (resp.\ of  ${g_1}_{|V_{17}}$), so $V_{15}=\sx{e_1,e_2}\xs$, $V_{17}=\sx{f_1,f_2}\xs$. 
 One easily checks that the elliptic curve $B_{18}$ is  isogenous to $\tC/\langle g_1, g_2 \rangle$ and we denote by $\langle v_1 \rangle = H^{1,0}(B_{18}) \subset V_{18}$.  
We define $C':=\tilde C/\sx{g_1}\xs$ and we calculate $H^{1,0}(C')=V_8\oplus\sx{e_1}\xs\oplus\sx{f_1}\xs\oplus\sx{v_1}\xs$ and the decomposition $J C'\sim B_8\times B_{15}\times B_{18}$.
Similarly if we define $D':=\tilde C/\sx{g_1g_2}\xs$ we have $H^{1,0}(D')=V_5\oplus\sx{e_2}\xs\oplus\sx{f_2}\xs\oplus\sx{v_1}\xs$ and $J D'\sim B_5\times B_{15}\times B_{18}$.
Thus we obtain \[J C'\times J D'\sim P(\tilde C,C)\times B_{18}^2\]
and we obtain that $P(\tilde C,C)$ moves if and only if the product of jacobians on the left move since $B_{18}$ is a fixed abelian variety. Moreover it moves if and only if $B_{15}$ does, since both $B_5$ and $B_8$ are fixed.

We consider the following covers
\[\bCD
&\ar[ld]\ar[lddd, end anchor={[xshift=4ex]},"\delta" swap]\tC\ar[rd]\ar[rddd, end anchor={[xshift=-4ex]},"\delta'"]\ar[dd]\\
\tC/\sx{g_1}\xs=C'\ar[d,"\beta"]&&\tC/\sx{g_1g_2}\xs=D'\ar[d,"\beta'"]\\
\tC/\sx{g_1,g_4}\xs=B_8\ar[d,"t"]&\tC/\sx g_2\xs=C\ar[dd]&\tC/\sx{g_1g_2,g_4}\xs=B_5\ar[d,"t'"]\\
\tC/\sx{g_1,g_3,g_4}\xs=\PP^1\ar[rd,"\eps"]&&\tC/\sx{g_1g_2,g_3,g_4}\xs=\PP^1\ar[ld,"\eps'" swap]\\
&\tC/\tG
\eCD\]
Assume that the critical values of the cover $\tC \to \tC/\tG \cong \PP^1$ are $\{ P_1 = \lambda, P_2 = 0, P_3 = 1, P_4 = \infty \}$. 
The maps $\eps$ and $\eps'$ are 2:1, ramified respectively over $\{0, \infty\}$, and $\{\lambda, \infty\}$. So  we can assume $\eps(z)=z^2$, and  $\eps'(z)=(1-\lambda)z^2+\lambda$. Thus the covers $\delta,\delta'$ are ramified over $1,-1,\mu,-\mu,\infty$ and $1,-1,\alpha,-\alpha,\infty$ respectly, where $\mu^2=\lambda$ and $\alpha^2=\frac{\lambda}{\lambda-1}$.
The 3:1 covers $t,t'$ are ramified over 3 points, which we can assume to be $1,-1,\infty$ for both maps. So $B_5$ and $B_8$ are isomorphic and an equation for them as $\Z/3$-covers of $\PP^1$ is given by $y^3=(x^2-1)$.
So the critical values of the maps $\beta$  (resp. $\beta'$) are the 6 preimages of $\pm\mu$, (resp. $\pm \alpha$) via $t$ (resp. $t'$). Since they move, varying $\lambda$,  the jacobians $JC' \sim JD'$ move.

\medskip
\noindent \textbf{Example 4.}\\
$\tg=10$, $g=4$, $b=6$, $\# 9$.\\
$\tG=G(36,12)=\ZZ/6\times S_3=\sx{g_2,g_3|g_2^2=g_3^3=1}\xs\times\sx{g_1,g_4|g_1^2=g_4^3=1,g_1^{-1}g_4g_1=g_4^2}\xs$,\\
$\ttheta(\gamma_1)= g_1g_4^2,\  \ttheta(\gamma_2)= g_1g_2g_4^2 ,\ \ttheta(\gamma_3)=g_3^2g_4 ,\ \ttheta(\gamma_4)= g_2g_3g_4^2 ,\quad\sigma=g_2$.\\
$V_+=V_6\oplus  V_9\oplus V_{13}$ and $V_-=V_5\oplus V_8\oplus V_{14}\oplus V_{17}$, where $\dim V_i = 1$ for $i =5,6,8,9$, $\dim V_{13} = \dim V_{14} = \dim V_{17} =2.$\\
$(S^2V_+)^\tG=(V_6\otimes V_9)\oplus (S^2V_{13})^\tG$ and $(S^2V_-)^\tG=(S^2V_{14})^\tG$ where all summands have dimension 1.
Condition \eqref{condA} is satisfied, but \eqref{condB1} and \eqref{condB2} are not.
We notice that $V_{14}$ is fixed by $g_3$ and $\Fix\sx{g_3}\xs=V_{13}\oplus V_{14}$. If we define $C':=\tC/\sx g_3\xs$ and $C'':=\tC/\sx g_2,g_3\xs=C'/\sx g_2\xs$ then we have $P(\tC,C)\sim A\times P(C',C'')$ where $A$ is a fixed abelian variety such that $H^{1,0}(A)=V_5\oplus V_8\oplus V_{17}$. 
Therefore the family $P(\tC,C)$ describes a Shimura variety if the family of $P(C',C'')$ moves. And this is true as noted in Example~2 of \cite{cfgp}.

\medskip
\noindent \textbf{Example 5.}\\
$\tg=11$, $g=3$, $b=12$, $\# 2$.\\
$\tG=G(32,42)=(\ZZ/2\times\ZZ/8)\rtimes\ZZ/2=\sx g_1,g_2,g_3,g_4,g_5|g_1^2=g_2^2=g_5^2=1,g_3^2=g_4^2=g_5,g_1^{-1}g_2g_1=g_2g_4,g_1^{-1}g_4g_1=g_4g_5,g_2^{-1}g_4g_2=g_4g_5\xs$,\\
$\ttheta(\gamma_1)=g_2g_4g_5,\ \ttheta(\gamma_2)=g_1,\ \ttheta(\gamma_3)=g_3,\ \ttheta(\gamma_4)=g_1g_2g_3g_4,\quad\sigma=g_5$.\\
$V_+=V_6\oplus V_{10}$ and $V_-= V_{12}\oplus 2V_{13}\oplus V_{14}$ where $\dim V_6=1$ and the others have dimension 2.\\
$(S^2V_+)^\tG=(S^2V_6)\oplus (S^2V_{10})^{\tG}$ and $(S^2V_-)^\tG=(V_{12}\otimes V_{14})^\tG$ where all summands have dimension 1.
Condition \eqref{condA} is satisfied, but \eqref{condB1} and \eqref{condB2} are not.
We consider its centraliser  $K = \sx g_1 g_2, g_3 g_4 \xs \cong \Z/8 \times \Z/2$.  We have $\sx  g_3g_4\xs \lhd  \sx g_3 g_4, g_5\xs \lhd  K \lhd \tG$ and the following diagram.
\[\bCD[column sep=small]
&\tC\ar[ld]\ar[rd]&\\
C'=\tC/\sx g_3g_4\xs\ar[rd]\ar[rdd,"\phi" swap]&&C=\tC/\sx g_5\xs\ar[ld]\\
&F=\tC/\sx g_3 g_4, g_5 \xs \dar&\\
&\tC/K=C'/K'=\PP^1\dar{\eps}\\
&\tC/\tG=\PP^1\\
\eCD\]
where $K'=K/\sx g_3g_4\xs\cong\ZZ/8 = \langle \overline{g_1g_2} \rangle$. Set $h:=  \overline{g_1g_2}$.

Assume that the critical values of the cover $\tC \to \tC/\tG \cong \PP^1$ are $\{ P_1 = 1, P_2 = \infty, P_3 = \lambda, P_4 = 0\}$. 
 The map $\eps$ is a double cover ramified at 2 points, so we can assume $\eps(z) = -z^2 +1$, hence it is ramified at $0, \infty$ and $\eps^{-1} (\lambda) = \{\pm \alpha\}$,  $\eps^{-1} (0) = \{\pm 1\}$ ($\alpha^2 = 1- \lambda$). 

The critical values of $\phi$ are $[\alpha, -\alpha, 1, -1]$ and the monodromy is given by  $[h^2, h^2, h, h^3]$, so the equation of the genus 6 curves $C'$ as  $\Z/8$-cyclic covers of $\mathbb P^1$ is: $y^8 = (x-\alpha)^2 (x+\alpha)^2 (x-1)(x+1)^3$. Hence an equation for the  $\Z/4$ coverings $F\to C'/K'$ is $y^4 = (x-\alpha)^2 (x+\alpha)^2 (x-1)(x+1)^3$, $F$ has genus 2 and this is the Shimura family $(4) = (29)$ of \cite{fgp}. 

Denote by $W_i:= \{ x \in H^0(K_{C'}) \  | \ h(x) = \zeta^i x\}$, where $\zeta$ is a primitive $8$-th root of unity. Then one easily computes that $\dim W_1 = \dim W_4 =0$, $ \dim W_2 = \dim W_3= \dim W_5 =\dim W_6 =1$, $\dim W_7 = 2$. Therefore  we have $H^0(K_F) \cong W_2 \oplus W_6$, $H^{1,0}(P(C',F) \cong W_3 \oplus W_5  \oplus W_7$, hence $(S^2 H^{1,0}(P(C',F) )^{\langle h \rangle}  \cong W_3 \otimes W_5$ is one dimensional. Thus the codifferential of the Prym map of the double coverings $C' \to F$ is the multiplication map $ (S^2 H^{1,0}(P(C',F) )^{\langle h \rangle}  \cong W_3 \otimes W_5 \to H^0(2K_{C'})^{\langle h \rangle}$ and it is an isomorphism. 

This shows that the Pryms $P(C',F)$ vary and yield a Shimura curve in $\A^{\delta}_4$.  In fact this is the first example with $\tg = 6$, $g=2$, $b =6$ in Table 1 and it satisfies both \eqref{condA} and \eqref{condB1}.

Finally the group algebra decomposition gives $ J\tC \sim B_6 \times B_{10}^2 \times B_{12}^2$,  where $\dim B_6 = \dim B_{10} = 1$, $\dim B_{12} = 4$. Moreover $ JC \sim  B_6 \times B_{10}^2$, hence  $P(\tC,C) \sim B_{12}^2$,  $ JC' \sim B_{10}^2 \times B_{12}$, $JF \sim B_{10}^2$, hence  $(P(C',F))^2 \sim B_{12}.$ Therefore $P(\tC,C) \sim  (P(C',F))^2$, hence it moves, and condition \eqref{condB} is satisfied. 

\medskip
\noindent \textbf{Example 6.}\\
$\tg=7$, $g=3$, $b=4$, $\# 4$.\\
$\tG=G(16,6)=\ZZ/8\rtimes\ZZ/2=\sx g_1\xs\rtimes\sx g_2\xs = \sx g_1, g_2 \ | \ g_1^8 = 1, \  g_2^2 = 1, \  g_2^{-1} g_1 g_2 = g_1^5 \xs$. Set $g_3 := g_1^2$, $g_4 = g_1^4$. \\
$\ttheta(\gamma_1)=g_2,\ \ttheta(\gamma_2)=g_2g_4,\ \ttheta(\gamma_3)=g_1g_4,\ \ttheta(\gamma_4)=g_1g_3g_4,\quad\sigma=g_4$.\\
$V_+=V_3\oplus V_5\oplus V_7$ and $V_-= V_9\oplus V_{10}$ where $\dim V_9=\dim V_{10}=2$ and the others have dimension 1.\\
$(S^2V_+)^\tG=(S^2V_3)\oplus (V_5\otimes V_7)$ and $(S^2V_-)^\tG=(V_{9}\otimes V_{10})^\tG$ where all summands have dimension 1.
Condition \eqref{condA} is satisfied, but \eqref{condB1} and \eqref{condB2} are not.
The trace of ${g_2}_{|V_9}$ and of  ${g_2}_{|V_{10}}$ is zero, hence  we can write $V_9=\sx e_1,e_2\xs$, $V_{10}=\sx f_1,f_2\xs$, where $g_2(e_1) = e_1$, $g_2(e_2) = -e_2$, $g_2(f_1) = f_1$, $g_2(f_2) = -f_2$.  Since $g_4$ acts as $-Id$ on $V_9$ and on $V_{10}$,  we have: $g_2g_4(e_1) = -e_1$, $g_2g_4(e_2) = e_2$, $g_2g_4(f_1) = -f_1$, $g_2g_4(f_2) = f_2$. 
The curves $C'=\tC/\sx g_2\xs$ and $D'=\tC/\sx g_2g_4\xs$, are ismorphic since $g_2$ and $g_2g_4$ are conjugated in $\tG$. 
From the group algebra decomposition we get:  $J\tC\sim B_3\times B_5\times B_9^2$,  where $\dim B_3=1$ and $\dim B_5=\dim B_9=2$. Furthemore $JC'\cong JD'\sim B_9$ and $P(\tC,C)\sim JC'\times JD'\sim (JC')^2\sim B_9^2$. In order to study $C'$, we consider $H:=\sx g_3,g_2\xs\cong\ZZ/4\times\ZZ/2$ so that $\sx g_2\xs\lhd H\lhd\tG$ and we get $\tC/H=\PP^1$. We obtain the following diagram
\[\bCD
\tC\ar[rd]\ar[rdd,"k"]\ar[ddd]&\\
&C'=\tC/\sx g_2\xs\dar{t}\\
&\tC/H= \PP^1\ar[ld,"l"]\\
\tC/\tG=\PP^1&
\eCD\]
The map $l$ is a double cover ramified in 2 points, so we may assume that $l(z)=z^2$. Hence the critical values for $k$ are $\{\pm 1,\pm\mu,0,\infty\}$ where $\mu^2=\lambda$. Thus the branch locus of the map $t : C' \to C'/\Z/4$ is $\{1,\mu,0,\infty\}$ with monodromy  $[\bar{2},\bar{2}, \bar{1}, \bar{1}]$, (here by $\bar{n}$ we mean the class  of $n$ in $\Z/4 \cong H/\langle g_2 \rangle$). So the equation for $C'$ is $y^4=(x-1)^2(x-\mu)^2x$ and we conclude that $JC'$ moves.  Hence $P(\tC,C)\sim  (JC')^2$ moves and condition \eqref{condB} is satisfied.

\section*{Appendix}

In this appendix we briefly explain how the \MAGMA\ script works.

The starting data is composed of the number $r$ of critical values on $\PP^1$ and the genus $\tilde g$ of the curve $\tilde C$.
Next we proceed by generating Prym datas with the following steps:
\begin{enumerate}
\item 
We are intrested in the case $r>3$. We consider mainly $r=4$, in this case the maximum order for the authomorphism group of a curve of genus $\tilde g$ is $12(\tilde g-1)$. 
For $r\ge 5$ this limit decreases to $4(\tilde g-1)$.

\item
An ammissible signature $(m_1,\dots,m_r)$ is formed by divisors of $|\tilde G|$ grater than 1 and must satisfy the Riemann-Hurwitz formula
\[2\tilde g-2=|\tilde G|\left(-2+\sum_{i=1}^r\left(1-\frac{1}{m_i}\right)\right).\]
We also restrict to  ordered sequences.

\item
Then we generate the data. For each even order we consider all groups of that cardinality. For each group $\tilde G$ and signature we focus on elements of order $m_1,\dots,m_r$.
We consider all possible $r$-uple $(x_1,\dots,x_r)$ of elements of $\tilde G$ such that $\operatorname{ord}(x_i)=m_i$ and $x_1\cdots x_r=1$.
Moreover all central elements of order 2 are saved.

\item
Finally we  exclude equivalent data. 
Two monodromy data are equivalent  in two cases:  either they are isomorphic under an automorphism of $\tilde G$ or under the action of the Braid group $\braid$.
One datum per orbit is saved.
\end{enumerate}

From a Prym datum $(\tilde G,\tilde \theta,\sigma)$ we obtain the datum $(G,\theta)$ of the quotient curve from the quotient by $\sigma$.
Now we can easly compute the genus $g$ of $C$ and the number $b$ of ramification points via the Riemann-Hurwitz formula or the formula given in Definition~\ref{prymdatum}.
After that we test the hypothesis of  Theorem \ref{teo1ram}. 

\begin{enumerate}[resume]
\item 
We test condition \eqref{condA} by computing the dimension of $S^2H^{1,0}$ for both curves $\tC$, $C$. To do this we use the Chevalley-Weil formula to obtain the $\tilde{G}$-isotopic decomposition of $H^{1,0}$ for both curves.
If condition   \eqref{condA} is verified the datum is recorded.

\item For these data conditions \eqref{condB1} and \eqref{condB2} are tested through the study of the $G$-isotopic decomposition of $S^2V_-$.
If one of them is verified it means that the hypothesis of  Theorem \ref{teo1ram} are satisfied and therefore the datum gives rise to a special variety.
Otherwise further ad-hoc analysis is needed to determine if condition \eqref{condB} is satisfied.

\end{enumerate}

\bigskip

The following tables list all the calculated data. 
Table 1 shows the examples that satisfy both conditions \eqref{condA} and \eqref{condB}. For each pair $\tg$ and $g$ (and thus $b$ and $p = \tg-g$) the examples are listed with a progressive index (under column $\#$). 
For each example it is reported the group $\tG$ and the id of $\tG$ and $G$ as small group in the \MAGMA\ Database. A check mark or a cross tells if the conditions are met or not. The script tests condition \eqref{condB2} only if \eqref{condB1} doesn't hold.
Different examples with same data are grouped in the same row.

In Table 2 for each $\tg$ and $g$  it is reported on the last column the number of non-equivalent data that satisfies condition \eqref{condA}. This includes all examples listed in Table 1 but also all  the cases where  neither \eqref{condB1}, nor  \eqref{condB2} hold  and we did not check  if \eqref{condB} holds. There aren't others cases satisfying \eqref{condA} with $\tg\le 70$.

\def\midline{\addlinespace[0mm]\hline\addlinespace[0.379mm]}
\def\cmidline{\addlinespace[0mm]\cline{2-12}\addlinespace[0.379mm]}
\def\testa{\begin{table}[H]
\begin{tabular}{*{12}{c}}
\toprule
$\tg$&$g$&$b$&$p$&$\#$&$\tG$&$\tG$ \texttt{Id}&$G$ \texttt{Id}&\eqref{condB1}&\eqref{condB2}&$b\ge6$&\eqref{condB}\\ 
\midrule}
\def\coda{\bottomrule
\end{tabular}
\end{table}}
\def\interruzione{\coda \newpage \testa}

\begin{center}
\begin{table}[H]\label{tab1}
\caption{Data satisfying condition \eqref{condA} and \eqref{condB}.}
\begin{tabular}{*{12}{c}}
\toprule
$\tg$&$g$&$b$&$p$&$\#$&$\tG$&$\tG$ \texttt{Id}&$G$ \texttt{Id}&\eqref{condB1}&\eqref{condB2}&$b\ge6$&\eqref{condB}\\ 
\midrule
2 & 0 & 6 & 2 & 1 &$\ZZ/4$& $G(4, 1)$ & $G(2, 1)$ & \checkmark & & \checkmark & \checkmark \\ 
2 & 0 & 6 & 2 & 2 &$\ZZ/6$& $G(6, 2)$ & $G(3, 1)$ & \checkmark & & \checkmark & \checkmark \\ 
2 & 0 & 6 & 2 & 3 &$D_4$& $G(8, 3)$ & $G(4, 2)$ & $X$ & $X$ & \checkmark & \checkmark \\ 
2 & 0 & 6 & 2 & 4 &$D_6$& $G(12, 4)$ & $G(6, 1)$ & $X$ & $X$ & \checkmark & \checkmark \\  
\midline
3 & 2 & 0 & 1 & 1 &$\ZZ/2\times\ZZ/4$& $G(8, 2)$ & $G(4, 1)$ & \checkmark & & $X$ & \checkmark \\ 
3 & 2 & 0 & 1 & 2 &$\ZZ/2\times D_4$& $G(16, 11)$ & $G(8, 3)$ & \checkmark & & $X$ & \checkmark \\ 
\cmidline
3 & 1 & 4 & 2 & 1 &$\ZZ/4$& $G(4, 1)$ & $G(2, 1)$ & \checkmark & & $X$ & \checkmark \\ 
3 & 1 & 4 & 2 & 2 &$\ZZ/6$& $G(6, 2)$ & $G(3, 1)$ & \checkmark & & $X$ & \checkmark \\ 
3 & 1 & 4 & 2 & 3,4,6 &$\ZZ/2\times\ZZ/4$& $G(8, 2)$ & $G(4, 1)$ & \checkmark & & $X$ & \checkmark \\ 
3 & 1 & 4 & 2 & 5 &$\ZZ/2\times\ZZ/4$& $G(8, 2)$ & $G(4, 2)$ & \checkmark & & $X$ & \checkmark \\ 
3 & 1 & 4 & 2 & 8 &$\ZZ/2\times D_4$& $G(16, 11)$ & $G(8, 5)$ & $X$ & \checkmark & $X$ & \checkmark \\ 
\cmidline
3 & 0 & 8 & 3 & 1 &$\ZZ/2\times\ZZ/4$& $G(8, 2)$ & $G(4, 2)$ & \checkmark & & \checkmark & \checkmark \\ 
\midline
4 & 2 & 2 & 2 & 1 &$\ZZ/6$& $G(6, 2)$ & $G(3, 1)$ & \checkmark & & $X$ & \checkmark \\ 
\cmidline
4 & 1 & 6 & 3 & 1 &$\ZZ/6$& $G(6, 2)$ & $G(3, 1)$ & \checkmark & & \checkmark & \checkmark \\ 
4 & 1 & 6 & 3 & 2,3 &$\ZZ/2\times\ZZ/6$& $G(12, 5)$ & $G(6, 2)$ & \checkmark & & \checkmark & \checkmark \\ 
\cmidline
4 & 0 & 10 & 4 & 1 &$Q_8$& $G(8, 4)$ & $G(4, 2)$ & $X$ & $X$ & \checkmark & \checkmark \\ 
\midline
5 & 3 & 0 & 2 & 1--3 &$\ZZ/2\times\ZZ/4$& $G(8, 2)$ & $G(4, 1)$ & \checkmark & & $X$ & \checkmark \\ 
5 & 3 & 0 & 2 & 4 &$\ZZ/2\times\ZZ/6$& $G(12, 5)$ & $G(6, 2)$ & \checkmark & & $X$ & \checkmark \\ 
5 & 3 & 0 & 2 & 5 &$(\ZZ/2\times\ZZ/2)\rtimes\ZZ/4$& $G(16, 3)$ & $G(8, 3)$ & \checkmark & & $X$ & \checkmark \\ 
5 & 3 & 0 & 2 & 8 &$(\ZZ/2\times\ZZ/2)\rtimes\ZZ/4$& $G(16, 3)$ & $G(8, 2)$ & $X$ & \checkmark & $X$ & \checkmark \\ 
5 & 3 & 0 & 2 & 9--11 &$\ZZ/2\times\ZZ/2\times\ZZ/4$& $G(16, 10)$ & $G(8, 2)$ & \checkmark & & $X$ & \checkmark \\ 
5 & 3 & 0 & 2 & 14 &$\ZZ/2\times\ZZ/2\times S_3$& $G(24, 14)$ & $G(12, 4)$ & $X$ & \checkmark & $X$ & \checkmark \\ 
5 & 3 & 0 & 2 & 15 &$\ZZ/2\times A_4$& $G(24, 13)$ & $G(12, 3)$ & \checkmark & & $X$ & \checkmark \\ 
5 & 3 & 0 & 2 & 16,17 &$(\ZZ/2)^2\wr\ZZ/2$& $G(32, 27)$ & $G(16, 11)$ & $X$ & \checkmark & $X$ & \checkmark \\ 
5 & 3 & 0 & 2 & 18 &$\ZZ/4\times D_4$& $G(32, 28)$ & $G(16, 13)$ & $X$ & \checkmark & $X$ & \checkmark \\ 
5 & 3 & 0 & 2 & 19 &$\ZZ/2\times S_4$& $G(48, 48)$ & $G(24, 12)$ & $X$ & \checkmark & $X$ & \checkmark \\ 
\cmidline
5 & 2 & 4 & 3 & 1,2 &$(\ZZ/2\times\ZZ/2)\rtimes\ZZ/4$& $G(16, 3)$ & $G(8, 3)$ & $X$ & \checkmark & $X$ & \checkmark \\ 
5 & 2 & 4 & 3 & 3 &$(\ZZ/2\times\ZZ/2)\rtimes\ZZ/4$& $G(16, 3)$ & $G(8, 3)$ & \checkmark & & $X$ & \checkmark \\ 
\cmidline
5 & 1 & 8 & 4 & 1 &$\ZZ/8$& $G(8, 1)$ & $G(4, 1)$ & \checkmark & & \checkmark & \checkmark \\ 
5 & 1 & 8 & 4 & 2 &$\ZZ/2\times\ZZ/4$& $G(8, 2)$ & $G(4, 2)$ & \checkmark & & \checkmark & \checkmark \\ 
5 & 1 & 8 & 4 & 3 &$Q_8$& $G(8, 4)$ & $G(4, 2)$ & $X$ & $X$ & \checkmark & \checkmark \\ 
5 & 1 & 8 & 4 & 4 &$(\ZZ/2\times\ZZ/2)\rtimes\ZZ/4$& $G(16, 3)$ & $G(8, 3)$ & $X$ & $X$ & \checkmark & \checkmark \\ 
5 & 1 & 8 & 4 & 5 &$Q_8\rtimes\ZZ/2$& $G(16, 8)$ & $G(8, 3)$ & $X$ & $X$ & \checkmark & \checkmark \\ 
5 & 1 & 8 & 4 & 6 &$\ZZ/2\times\ZZ/2\times\ZZ/4$& $G(16, 10)$ & $G(8, 5)$ & \checkmark & & \checkmark & \checkmark \\ 
5 & 1 & 8 & 4 & 7 &$\ZZ/2\circ D_4$& $G(16, 13)$ & $G(8, 5)$ & $X$ & $X$ & \checkmark & \checkmark \\ 
5 & 1 & 8 & 4 & 8 &$\ZZ/4\times D_4$& $G(32, 28)$ & $G(16, 11)$ & $X$ & \checkmark & \checkmark & \checkmark \\ 
5 & 1 & 8 & 4 & 9 &$\ZZ/8\rtimes(\ZZ/2\times\ZZ/2)$& $G(32, 43)$ & $G(16, 11)$ & $X$ & $X$ & \checkmark & \checkmark \\ 
\cmidline
5 & 0 & 12 & 5 & 1 &$\ZZ/3\rtimes\ZZ/4$& $G(12, 1)$ & $G(6, 1)$ & $X$ & $X$ & \checkmark & \checkmark \\ 
\midline
6 & 3 & 2 & 3 & 1,2 &$\ZZ/2\times\ZZ/6$& $G(12, 5)$ & $G(6, 2)$ & \checkmark & & $X$ & \checkmark \\ 
\cmidline
6 & 2 & 6 & 4 & 1 &$\ZZ/8$& $G(8, 1)$ & $G(4, 1)$ & \checkmark & & \checkmark & \checkmark \\ 
6 & 2 & 6 & 4 & 2 &$\ZZ/10$& $G(10, 2)$ & $G(5, 1)$ & \checkmark & & \checkmark & \checkmark \\ 
6 & 2 & 6 & 4 & 3 &$\ZZ/3\rtimes\ZZ/4$& $G(12, 1)$ & $G(6, 1)$ & $X$ & $X$ & \checkmark & \checkmark \\ 
6 & 2 & 6 & 4 & 4 &$\ZZ/3\rtimes D_4$& $G(24, 8)$ & $G(12, 4)$ & $X$ & $X$ & \checkmark & \checkmark \\ 
\interruzione
7 & 4 & 0 & 3 & 1 &$\ZZ/2\times\ZZ/6$& $G(12, 5)$ & $G(6, 2)$ & \checkmark & & $X$ & \checkmark \\ 
7 & 4 & 0 & 3 & 2 &$\ZZ/4\rtimes\ZZ/4$& $G(16, 4)$ & $G(8, 4)$ & \checkmark & & $X$ & \checkmark \\ 
7 & 4 & 0 & 3 & 3 &$\ZZ/4\rtimes\ZZ/4$& $G(16, 4)$ & $G(8, 4)$ & $X$ & \checkmark & $X$ & \checkmark \\ 
7 & 4 & 0 & 3 & 4 &$\ZZ/2\times Q_8$& $G(16, 12)$ & $G(8, 4)$ & \checkmark & & $X$ & \checkmark \\ 
\cmidline
7 & 3 & 4 & 4 & 1 &$\ZZ/8$& $G(8, 1)$ & $G(4, 1)$ & \checkmark & & $X$ & \checkmark \\ 
7 & 3 & 4 & 4 & 2 &$\ZZ/2\times\ZZ/6$& $G(12, 5)$ & $G(6, 2)$ & \checkmark & & $X$ & \checkmark \\ 
7 & 3 & 4 & 4 & 3 &$\ZZ/12$& $G(12, 2)$ & $G(6, 2)$ & \checkmark & & $X$ & \checkmark \\ 
7 & 3 & 4 & 4 & 4 &$M_4(2)$& $G(16, 6)$ & $G(8, 2)$ & $X$ & $X$ & $X$ & \checkmark \\ 
7 & 3 & 4 & 4 & 6,7 &$\ZZ/4\times\ZZ/4$& $G(16, 2)$ & $G(8, 2)$ & \checkmark & & $X$ & \checkmark \\ 
7 & 3 & 4 & 4 & 8,9 &$\ZZ/4\rtimes\ZZ/4$& $G(16, 4)$ & $G(8, 2)$ & $X$ & \checkmark & $X$ & \checkmark \\ 
\cmidline
7 & 2 & 8 & 5 & 1,3 &$\ZZ/12$& $G(12, 2)$ & $G(6, 2)$ & \checkmark & & \checkmark & \checkmark \\ 
7 & 2 & 8 & 5 & 2 &$\ZZ/3\rtimes\ZZ/4$& $G(12, 1)$ & $G(6, 1)$ & $X$ & $X$ & \checkmark & \checkmark \\ 
7 & 2 & 8 & 5 & 4 &$\ZZ/4\rtimes\ZZ/4$& $G(16, 4)$ & $G(8, 3)$ & $X$ & \checkmark & \checkmark & \checkmark \\ 
7 & 2 & 8 & 5 & 5 &$\ZZ/4\rtimes\ZZ/4$& $G(16, 4)$ & $G(8, 3)$ & \checkmark & & \checkmark & \checkmark \\ 
\cmidline
7 & 1 & 12 & 6 & 1 &$\ZZ/3\rtimes\ZZ/4$& $G(12, 1)$ & $G(6, 1)$ & $X$ & $X$ & \checkmark & \checkmark \\ 
7 & 1 & 12 & 6 & 2 &$\ZZ/2\times Q_8$& $G(16, 12)$ & $G(8, 5)$ & $X$ & \checkmark & \checkmark & \checkmark \\ 
7 & 1 & 12 & 6 & 3 &$\ZZ/4\times S_3$& $G(24, 5)$ & $G(12, 4)$ & $X$ & $X$ & \checkmark & \checkmark \\ 
7 & 1 & 12 & 6 & 4 &$Q_8\rtimes\ZZ/3$& $G(24, 3)$ & $G(12, 3)$ & $X$ & $X$ & \checkmark & \checkmark \\ 
7 & 1 & 12 & 6 & 5 &$Q_8\rtimes_3S_3$& $G(48, 41)$ & $G(24, 14)$ & $X$ & $X$ & \checkmark & \checkmark \\ 
\midline
8 & 4 & 2 & 4 & 1,2 &$\ZZ/10$& $G(10, 2)$ & $G(5, 1)$ & \checkmark & & $X$ & \checkmark \\ 
8 & 4 & 2 & 4 & 3,4 &$\ZZ/2\times\ZZ/6$& $G(12, 5)$ & $G(6, 2)$ & \checkmark & & $X$ & \checkmark \\ 
\midline
9 & 5 & 0 & 4 & 1--5 &$\ZZ/2\times\ZZ/8$& $G(16, 5)$ & $G(8, 1)$ & \checkmark & & $X$ & \checkmark \\ 
9 & 5 & 0 & 4 & 6 &$\ZZ/4\times\ZZ/4$& $G(16, 2)$ & $G(8, 2)$ & \checkmark & & $X$ & \checkmark \\ 
9 & 5 & 0 & 4 & 8 &$\ZZ/4\rtimes\ZZ/4$& $G(16, 4)$ & $G(8, 4)$ & \checkmark & & $X$ & \checkmark \\ 
9 & 5 & 0 & 4 & 11--13 &$\ZZ/2\times\ZZ/2\times\ZZ/4$& $G(16, 10)$ & $G(8, 2)$ & \checkmark & & $X$ & \checkmark \\ 
9 & 5 & 0 & 4 & 19,20 &$\ZZ/2\times\ZZ/2\times\ZZ/6$& $G(24, 15)$ & $G(12, 5)$ & \checkmark & & $X$ & \checkmark \\ 
9 & 5 & 0 & 4 & 21 &$\ZZ/6\rtimes\ZZ/4$& $G(24, 7)$ & $G(12, 1)$ & $X$ & \checkmark & $X$ & \checkmark \\ 
9 & 5 & 0 & 4 & 30 &$\ZZ/2\times(\ZZ/2)^2\rtimes\ZZ/4$& $G(32, 22)$ & $G(16, 11)$ & \checkmark & & $X$ & \checkmark \\ 
9 & 5 & 0 & 4 & 34 &$\ZZ/4\times D_4$& $G(32, 25)$ & $G(16, 13)$ & \checkmark & & $X$ & \checkmark \\ 
9 & 5 & 0 & 4 & 35 &$\ZZ/4\times D_4$& $G(32, 25)$ & $G(16, 10)$ & $X$ & \checkmark & $X$ & \checkmark \\ 
9 & 5 & 0 & 4 & 37 &$(\ZZ/2)^2\mathbin.D_4$& $G(32, 30)$ & $G(16, 13)$ & $X$ & \checkmark & $X$ & \checkmark \\ 
9 & 5 & 0 & 4 & 44,45 &$\ZZ/2\times\ZZ/3\rtimes D_4$& $G(48, 43)$ & $G(24, 8)$ & $X$ & \checkmark & $X$ & \checkmark \\ 
9 & 5 & 0 & 4 & 48--50 &$(\ZZ/2)^3\rtimes_2D_4$& $G(64, 73)$ & $G(32, 27)$ & $X$ & \checkmark & $X$ & \checkmark \\ 
9 & 5 & 0 & 4 & 55 &$\ZZ/4\rtimes D_8$& $G(64, 140)$ & $G(32, 43)$ & $X$ & \checkmark & $X$ & \checkmark \\ 
\cmidline
9 & 4 & 4 & 5 & 1 &$\ZZ/10$& $G(10, 2)$ & $G(5, 1)$ & \checkmark & & $X$ & \checkmark \\ 
9 & 4 & 4 & 5 & 2,3 &$\ZZ/12$& $G(12, 2)$ & $G(6, 2)$ & \checkmark & & $X$ & \checkmark \\ 
\cmidline
9 & 3 & 8 & 6 & 1 &$\ZZ/12$& $G(12, 2)$ & $G(6, 2)$ & \checkmark & & \checkmark & \checkmark \\ 
9 & 3 & 8 & 6 & 2,3 &$\ZZ/2\times\ZZ/8$& $G(16, 5)$ & $G(8, 2)$ & \checkmark & & \checkmark & \checkmark \\ 
9 & 3 & 8 & 6 & 4 &$\ZZ/4\times\ZZ/4$& $G(16, 2)$ & $G(8, 2)$ & \checkmark & & \checkmark & \checkmark \\ 
9 & 3 & 8 & 6 & 5 &$\ZZ/4\rtimes\ZZ/4$& $G(16, 4)$ & $G(8, 3)$ & $X$ & $X$ & \checkmark & \checkmark \\ 
9 & 3 & 8 & 6 & 6 &$\ZZ/4\times S_3$& $G(24, 5)$ & $G(12, 4)$ & $X$ & $X$ & \checkmark & \checkmark \\ 
9 & 3 & 8 & 6 & 7 &$\ZZ/4\times S_3$& $G(24, 5)$ & $G(12, 4)$ & \checkmark & & \checkmark & \checkmark \\ 
9 & 3 & 8 & 6 & 8 &$\ZZ/4\times D_4$& $G(32, 25)$ & $G(16, 11)$ & \checkmark & & \checkmark & \checkmark \\ 
9 & 3 & 8 & 6 & 9 &$(\ZZ/2)^2\mathbin.D_4$& $G(32, 30)$ & $G(16, 11)$ & $X$ & $X$ & \checkmark & \checkmark \\ 
\cmidline
9 & 2 & 12 & 7 & 1 &$\ZZ/6\rtimes\ZZ/4$& $G(24, 7)$ & $G(12, 4)$ & $X$ & \checkmark & \checkmark & \checkmark \\ 
\interruzione
10 & 4 & 6 & 6 & 1,2 &$\ZZ/12$& $G(12, 2)$ & $G(6, 2)$ & \checkmark & & \checkmark & \checkmark \\ 
10 & 4 & 6 & 6 & 3,4 &$\ZZ/3\times\ZZ/6$& $G(18, 5)$ & $G(9, 2)$ & \checkmark & & \checkmark & \checkmark \\ 
10 & 4 & 6 & 6 & 5 &$\ZZ/3\times D_4$& $G(24, 10)$ & $G(12, 5)$ & $X$ & \checkmark & \checkmark & \checkmark \\ 
10 & 4 & 6 & 6 & 6 &$Q_8\rtimes\ZZ/3$& $G(24, 3)$ & $G(12, 3)$ & $X$ & $X$ & \checkmark & \checkmark \\ 
10 & 4 & 6 & 6 & 7,9 &$\ZZ/6\times S_3$& $G(36, 12)$ & $G(18, 3)$ & $X$ & $X$ & \checkmark & \checkmark \\ 
10 & 4 & 6 & 6 & 8 &$\ZZ/6\times S_3$& $G(36, 12)$ & $G(18, 3)$ & $X$ & \checkmark & \checkmark & \checkmark \\ 
10 & 4 & 6 & 6 & 10 &$Q_8\rtimes S_3$& $G(48, 29)$ & $G(24, 12)$ & $X$ & $X$ & \checkmark & \checkmark \\ 
\midline
11 & 6 & 0 & 5 & 1 &$\ZZ/2\times\ZZ/8$& $G(16, 5)$ & $G(8, 1)$ & \checkmark & & $X$ & \checkmark \\ 
11 & 6 & 0 & 5 & 2 &$\ZZ/2\times\ZZ/12$& $G(24, 9)$ & $G(12, 2)$ & \checkmark & & $X$ & \checkmark \\ 
11 & 6 & 0 & 5 & 3,4 &$\ZZ/6\rtimes\ZZ/4$& $G(24, 7)$ & $G(12, 1)$ & $X$ & \checkmark & $X$ & \checkmark \\ 
\cmidline
11 & 3 & 12 & 8 & 1 &$\ZZ/6\rtimes\ZZ/4$& $G(24, 7)$ & $G(12, 4)$ & $X$ & \checkmark & \checkmark & \checkmark \\ 
11 & 3 & 12 & 8 & 2 &$\ZZ/4\circ D_8$& $G(32, 42)$ & $G(16, 11)$ & $X$ & $X$ & \checkmark & \checkmark \\ 
11 & 3 & 12 & 8 & 3 &$\ZZ/8\mathbin.(\ZZ/2)^2$& $G(32, 44)$ & $G(16, 11)$ & $X$ & $X$ & \checkmark & \checkmark \\ 
\midline
12 & 6 & 2 & 6 & 1,2 &$\ZZ/14$& $G(14, 2)$ & $G(7, 1)$ & \checkmark & & $X$ & \checkmark \\ 
12 & 6 & 2 & 6 & 3 &$\ZZ/2\times\ZZ/10$& $G(20, 5)$ & $G(10, 2)$ & \checkmark & & $X$ & \checkmark \\ 
\midline
13 & 7 & 0 & 6 & 1 &$\ZZ/2\times\ZZ/8$& $G(16, 5)$ & $G(8, 1)$ & \checkmark & & $X$ & \checkmark \\ 
13 & 7 & 0 & 6 & 2 &$\ZZ/2\times\ZZ/10$& $G(20, 5)$ & $G(10, 2)$ & \checkmark & & $X$ & \checkmark \\ 
13 & 7 & 0 & 6 & 3 &$\ZZ/2\times\ZZ/12$& $G(24, 9)$ & $G(12, 2)$ & \checkmark & & $X$ & \checkmark \\ 
13 & 7 & 0 & 6 & 6 &$D_4\rtimes\ZZ/4$& $G(32, 9)$ & $G(16, 7)$ & \checkmark & & $X$ & \checkmark \\ 
13 & 7 & 0 & 6 & 11 &$(\ZZ/4\times\ZZ/4)\rtimes\ZZ/2$& $G(32, 24)$ & $G(16, 13)$ & \checkmark & & $X$ & \checkmark \\ 
13 & 7 & 0 & 6 & 12 &$(\ZZ/2\times\ZZ/2)\rtimes Q_8$& $G(32, 29)$ & $G(16, 13)$ & $X$ & \checkmark & $X$ & \checkmark \\ 
13 & 7 & 0 & 6 & 17 &$D_4\rtimes D_4$& $G(64, 130)$ & $G(32, 43)$ & $X$ & \checkmark & $X$ & \checkmark \\ 
\cmidline
13 & 6 & 4 & 7 & 1 &$\ZZ/18$& $G(18, 2)$ & $G(9, 1)$ & \checkmark & & $X$ & \checkmark \\ 
13 & 6 & 4 & 7 & 2 &$\ZZ/2\times\ZZ/12$& $G(24, 9)$ & $G(12, 5)$ & \checkmark & & $X$ & \checkmark \\ 
13 & 6 & 4 & 7 & 3 &$\ZZ/2\times\ZZ/12$& $G(24, 9)$ & $G(12, 2)$ & \checkmark & & $X$ & \checkmark \\ 
\cmidline
13 & 5 & 8 & 8 & 1 &$Q_8\rtimes\ZZ/3$& $G(24, 3)$ & $G(12, 3)$ & $X$ & $X$ & \checkmark & \checkmark \\ 
13 & 5 & 8 & 8 & 2 &$(\ZZ/2\times\ZZ/2)\rtimes\ZZ/8$& $G(32, 5)$ & $G(16, 3)$ & $X$ & $X$ & \checkmark & \checkmark \\ 
13 & 5 & 8 & 8 & 3 &$\ZZ/8\circ D_4$& $G(32, 38)$ & $G(16, 10)$ & $X$ & $X$ & \checkmark & \checkmark \\ 
13 & 5 & 8 & 8 & 4 &$\ZZ/2\times {\rm SD}_{16}$& $G(32, 40)$ & $G(16, 11)$ & $X$ & $X$ & \checkmark & \checkmark \\ 
13 & 5 & 8 & 8 & 5 &$(\ZZ/2\times\ZZ/2)\rtimes Q_8$& $G(32, 29)$ & $G(16, 11)$ & $X$ & \checkmark & \checkmark & \checkmark \\ 
13 & 5 & 8 & 8 & 6 &$\ZZ/4\mathbin.A_4$& $G(48, 33)$ & $G(24, 13)$ & $X$ & $X$ & \checkmark & \checkmark \\ 
13 & 5 & 8 & 8 & 7 &$D_4\rtimes D_4$& $G(64, 130)$ & $G(32, 27)$ & $X$ & $X$ & \checkmark & \checkmark \\ 
\midline
14 & 7 & 2 & 7 & 1 &$\ZZ/18$& $G(18, 2)$ & $G(9, 1)$ & \checkmark & & $X$ & \checkmark \\ 
\cmidline
14 & 6 & 6 & 8 & 1 &$\ZZ/16$& $G(16, 1)$ & $G(8, 1)$ & \checkmark & & \checkmark & \checkmark \\ 
\midline
15 & 8 & 0 & 7 & 1,2 &$\ZZ/2\times\ZZ/12$& $G(24, 9)$ & $G(12, 2)$ & \checkmark & & $X$ & \checkmark \\ 
\cmidline
15 & 7 & 4 & 8 & 1 &$\ZZ/16$& $G(16, 1)$ & $G(8, 1)$ & \checkmark & & $X$ & \checkmark \\ 
15 & 7 & 4 & 8 & 2 &$\ZZ/2\times\ZZ/12$& $G(24, 9)$ & $G(12, 5)$ & \checkmark & & $X$ & \checkmark \\ 
\cmidline
15 & 5 & 12 & 10 & 1 &$\ZZ/4\times D_5$& $G(40, 5)$ & $G(20, 4)$ & $X$ & $X$ & \checkmark & \checkmark \\ 
15 & 5 & 12 & 10 & 2 &$\ZZ/4\circ D_{12}$& $G(48, 37)$ & $G(24, 14)$ & $X$ & $X$ & \checkmark & \checkmark \\ 
15 & 5 & 12 & 10 & 3 &$D_4\rtimes_2S_3$& $G(48, 39)$ & $G(24, 14)$ & $X$ & $X$ & \checkmark & \checkmark \\ 
\midline

17 & 9 & 0 & 8 & 1 &$\ZZ/2\times\ZZ/12$& $G(24, 9)$ & $G(12, 2)$ & \checkmark & & $X$ & \checkmark \\ 
17 & 9 & 0 & 8 & 3 &$\ZZ/2\times\ZZ/4\times\ZZ/4$& $G(32, 21)$ & $G(16, 2)$ & \checkmark & & $X$ & \checkmark \\ 
17 & 9 & 0 & 8 & 4 &$\ZZ/4\times Q_8$& $G(32, 26)$ & $G(16, 10)$ & $X$ & \checkmark & $X$ & \checkmark \\ 
17 & 9 & 0 & 8 & 6 &$D_6\rtimes\ZZ/4$& $G(48, 14)$ & $G(24, 6)$ & \checkmark & & $X$ & \checkmark \\ 
17 & 9 & 0 & 8 & 12 &$A_4\rtimes\ZZ/4$& $G(48, 30)$ & $G(24, 12)$ & $X$ & \checkmark & $X$ & \checkmark \\ 
17 & 9 & 0 & 8 & 17 &$(\ZZ/2)^4\mathbin._3(\ZZ/2)^2$& $G(64, 71)$ & $G(32, 34)$ & \checkmark & & $X$ & \checkmark \\ 
17 & 9 & 0 & 8 & 31 &$M_4(2)\rtimes D_4$& $G(128, 738)$ & $G(64, 134)$ & $X$ & \checkmark & $X$ & \checkmark \\ 
\cmidline
17 & 5 & 16 & 12 & 1 &$D_4\mathbin._{10}D_4$& $G(64, 137)$ & $G(32, 27)$ & $X$ & $X$ & \checkmark & \checkmark \\ 
\midline
19 & 9 & 4 & 10 & 1 &$\ZZ/20$& $G(20, 2)$ & $G(10, 2)$ & \checkmark & & $X$ & \checkmark \\ 
19 & 9 & 4 & 10 & 2 &$\ZZ/4\times D_5$& $G(40, 5)$ & $G(20, 4)$ & \checkmark & & $X$ & \checkmark \\ 
\interruzione
21 & 11 & 0 & 10 & 1 &$\ZZ/4\times\ZZ/8$& $G(32, 3)$ & $G(16, 5)$ & \checkmark & & $X$ & \checkmark \\ 
21 & 11 & 0 & 10 & 5 &$\ZZ/6\mathbin.D_4$& $G(48, 19)$ & $G(24, 8)$ & $X$ & \checkmark & $X$ & \checkmark \\ 
21 & 11 & 0 & 10 & 6 &$\ZZ/4\times D_8$& $G(64, 118)$ & $G(32, 39)$ & \checkmark & & $X$ & \checkmark \\ 
\cmidline
21 & 9 & 8 & 12 & 1 &$S_3\times\ZZ/8$& $G(48, 4)$ & $G(24, 5)$ & $X$ & $X$ & \checkmark & \checkmark \\ 
21 & 9 & 8 & 12 & 2 &$\ZZ/24\rtimes\ZZ/2$& $G(48, 6)$ & $G(24, 6)$ & $X$ & $X$ & \checkmark & \checkmark \\ 
21 & 9 & 8 & 12 & 3 &$\ZZ/8\circ D_8$& $G(64, 124)$ & $G(32, 25)$ & $X$ & $X$ & \checkmark & \checkmark \\ 
21 & 9 & 8 & 12 & 4 &$D_4\mathbin._7D_4$& $G(64, 133)$ & $G(32, 27)$ & $X$ & $X$ & \checkmark & \checkmark \\ 
21 & 9 & 8 & 12 & 5 &$\ZZ/8\mathbin._{12}D_4$& $G(64, 176)$ & $G(32, 34)$ & $X$ & $X$ & \checkmark & \checkmark \\ 
21 & 9 & 8 & 12 & 6 &$\ZZ/4\mathbin._6S_4$& $G(96, 192)$ & $G(48, 48)$ & $X$ & $X$ & \checkmark & \checkmark \\ 
\midline

25 & 13 & 0 & 12 & 6 &$(\ZZ/2)^3\rtimes Q_8$& $G(64, 74)$ & $G(32, 27)$ & $X$ & \checkmark & $X$ & \checkmark \\ 
\cmidline
25 & 9 & 16 & 16 & 1 &$(\ZZ/2\times \ZZ/8)\mathbin._2D_4$& $G(128, 749)$ & $G(64, 73)$ & $X$ & $X$ & \checkmark & \checkmark \\ 
\midline
29 & 13 & 8 & 16 & 1 &$Q_{16}\mathbin.D_4$& $G(128, 925)$ & $G(64, 128)$ & $X$ & $X$ & \checkmark & \checkmark \\ 
\midline
31 & 13 & 12 & 18 & 1 &$D_6\mathbin.D_6$& $G(144, 141)$ & $G(72, 46)$ & $X$ & $X$ & \checkmark & \checkmark \\ 





\coda
\end{center}

\begin{center}
\begin{table}[H]\label{tab2}
\caption{Number of data satisfying condition \eqref{condA}.}
\begin{tabular}{*{5}{c}}
\toprule
$\tg$&$g$&$b$&$p$&$\#$\\
\midrule
2 & 0 & 6 & 2 & 4 \\ \hline
3 & 2 & 0 & 1 & 2 \\ 
3 & 1 & 4 & 2 & 8 \\ 
3 & 0 & 8 & 3 & 1 \\ \hline
4 & 2 & 2 & 2 & 2 \\ 
4 & 1 & 6 & 3 & 3 \\ 
4 & 0 & 10 & 4 & 1 \\ \hline
5 & 3 & 0 & 2 & 19 \\ 
5 & 2 & 4 & 3 & 3 \\ 
5 & 1 & 8 & 4 & 9 \\ 
5 & 0 & 12 & 5 & 1 \\ \hline
6 & 3 & 2 & 3 & 2 \\ 
6 & 2 & 6 & 4 & 4 \\ \hline
7 & 4 & 0 & 3 & 5 \\ 
7 & 3 & 4 & 4 & 11 \\ 
7 & 2 & 8 & 5 & 5 \\ 
7 & 1 & 12 & 6 & 5 \\ \hline
8 & 4 & 2 & 4 & 5 \\ 
\bottomrule
\end{tabular}\qquad
\begin{tabular}{*{5}{c}}
\toprule
$\tg$&$g$&$b$&$p$&$\#$\\ 
\midrule
9 & 5 & 0 & 4 & 58 \\ 
9 & 4 & 4 & 5 & 3 \\ 
9 & 3 & 8 & 6 & 9 \\ 
9 & 2 & 12 & 7 & 1 \\ \hline
10 & 4 & 6 & 6 & 10 \\ \hline
11 & 6 & 0 & 5 & 6 \\ 
11 & 5 & 4 & 6 & 2 \\ 
11 & 3 & 12 & 8 & 3 \\ \hline
12 & 6 & 2 & 6 & 3 \\ \hline
13 & 7 & 0 & 6 & 22 \\ 
13 & 6 & 4 & 7 & 3 \\ 
13 & 5 & 8 & 8 & 7 \\ \hline
14 & 7 & 2 & 7 & 1 \\ 
14 & 6 & 6 & 8 & 1 \\ \hline
15 & 8 & 0 & 7 & 2 \\ 
15 & 7 & 4 & 8 & 3 \\ 
15 & 5 & 12 & 10 & 3 \\ \hline
16 & 8 & 2 & 8 & 2 \\ 
\bottomrule
\end{tabular}\qquad
\begin{tabular}{*{5}{c}}
\toprule
$\tg$&$g$&$b$&$p$&$\#$\\ 
\midrule
17 & 9 & 0 & 8 & 35 \\ 
17 & 5 & 16 & 12 & 1 \\ \hline
19 & 10 & 0 & 9 & 1 \\ 
19 & 9 & 4 & 10 & 4 \\ \hline
21 & 11 & 0 & 10 & 10 \\ 
21 & 9 & 8 & 12 & 6 \\ \hline
23 & 11 & 4 & 12 & 4 \\ \hline
25 & 13 & 0 & 12 & 18 \\ 
25 & 9 & 16 & 16 & 1 \\ \hline
29 & 15 & 0 & 14 & 4 \\ 
29 & 13 & 8 & 16 & 1 \\ \hline
31 & 15 & 4 & 16 & 1 \\ 
31 & 13 & 12 & 18 & 1 \\ \hline
33 & 17 & 0 & 16 & 5 \\ \hline
39 & 19 & 4 & 20 & 1 \\ \hline
41 & 21 & 0 & 20 & 2 \\ \hline
47 & 23 & 4 & 24 & 1 \\ \hline
49 & 25 & 0 & 24 & 2 \\ 
\bottomrule
\end{tabular}
\end{table}
\end{center}

\newpage


\begin{thebibliography} {99}


\bibitem{acg2} E.~Arbarello, M.~Cornalba, and P.~A. Griffiths.
  \newblock {\em Geometry of algebraic curves. {V}ol. {II}}, volume
  268 of {\em Grundlehren der Mathematischen Wissenschaften}.
  \newblock Springer-Verlag, New York, 2011.

\bibitem{bcv}
 Bardelli,  ~F., Ciliberto,  ~C., Verra,  ~A., Curves of minimal genus on a general abelian variety,  Compos. Math. 96 (1995), no. 2, 115--147. 








\bibitem{birman-braids} J.~S. Birman.  \newblock {\em Braids, links,
    and mapping class groups}.  \newblock Princeton University Press,
  Princeton, N.J., 1974.  \newblock Annals of Mathematics Studies,
  No. 82.



\bibitem{broughton-equi} S.~A. Broughton.  \newblock The equisymmetric
  stratification of the moduli space and the {K}rull dimension of
  mapping class groups.  \newblock {\em Topology Appl.},
  37(2):101--113, 1990.

\bibitem{baffo-linceo} F.~Catanese, M.~L{\"o}nne, and F.~Perroni.
  \newblock Irreducibility of the space of dihedral covers of the
  projective line of a given numerical type.  \newblock {\em Atti
    Accad. Naz. Lincei Cl. Sci. Fis. Mat. Natur. Rend. Lincei (9)
    Mat. Appl.}, 22(3):291--309, 2011.

\bibitem{clp2} F.~Catanese, M.~L\"onne, and F.~Perroni.  \newblock
  Genus stabilization for the components of moduli spaces of curves
  with symmetries.  \newblock {\em Algebr. Geom.}, 3(1):23--49, 2016.







\bibitem{cf3}Colombo, ~E., Frediani  ~P.,  A bound on the dimension of a totally geodesic submanifold in the Prym locus. Collectanea Mathematica,. 70, n.1, (2019), 51-57.  DOI: 10.1007/s13348-018-0215-0.  

\bibitem{cf4}Colombo, ~E., Frediani  ~P.,  Second fundamental form of the Prym map in the ramified case. To appear in "Galois Covers, Grothendieck-Teichmueller Theory and Dessins d'Enfants - Interactions between Geometry, Topology, Number Theory and Algebra", Springer Proceedings in Mathematics \& Statistics.   arXiv:1812.07402. 




\bibitem{cfg} E.~Colombo, P.~Frediani, and A.~Ghigi.  \newblock On
  totally geodesic submanifolds in the {J}acobian locus.  \newblock{\em
  International Journal of Mathematics}, 26 (2015), no. 1, 1550005 (21
  pages).


\bibitem{cfgp} Colombo, ~E., Frediani, ~P., Ghigi, ~A., Penegini ~M., Shimura curves in the Prym locus.
Communications in Contemporary Mathematics, Vol. 21, N0. 2 (2019) 1850009 (34 pages). DOI: 10.1142/S0219199718500098. 


\bibitem{cpt} E.~Colombo, G.~P. Pirola, and A.~Tortora.  \newblock
  Hodge-{G}aussian maps.  \newblock {\em Ann. Scuola Norm. Sup. Pisa
    Cl. Sci. (4)}, 30(1):125--146, 2001.


 































\bibitem{fgp} P.~Frediani, A.~Ghigi and M.~Penegini. \newblock Shimura
  varieties in the Torelli locus via Galois coverings. {\em
    Int. Math. Res. Not.} 2015, no. 20, 10595-10623.


\bibitem{fgs} P.~Frediani, A.~Ghigi and I~Spelta. \newblock Infinitely many Shimura varieties in the Jacobian locus for $g \leq 4$.  arXiv:1910.13245. To appear in Ann. Scuola Norm. Sup. Pisa Cl. Sci. 
  
  
  \bibitem{fp} Frediani, Paola; Pirola Gian Pietro.  On the geometry of the second fundamental form of the Torelli map.  arXiv:1907.11407. 
  

\bibitem{friedman-smith} Friedman, Robert; Smith, Roy, The generic
  Torelli theorem for the Prym map. {\em Invent. Math.} 67 (1982),
  no. 3, 473-490.
  
   \bibitem{gh} Ghigi, Alessandro;  On some differential-geometric aspects of the Torelli map. Boll. Unione Mat. Ital. 12 (2019), no. 1-2, 133?144.
   
 \bibitem{gpt} Ghigi, Alessandro;  Pirola, Gian Pietro; Torelli, Sara. Roy,  Totally geodesic subvarieties in the moduli space of curves. To appear on Communications in Contemporary Mathematics. 
 https://doi.org/10.1142/S0219199720500200. 



\bibitem{gavino} G.~Gonz{\'a}lez~D{\'{\i}}ez and W.~J. Harvey.
  \newblock Moduli of {R}iemann surfaces with symmetry.  \newblock In
  {\em Discrete groups and geometry ({B}irmingham, 1991)}, volume 173
  of {\em London Math. Soc. Lecture Note Ser.}, pages
  75--93. Cambridge Univ. Press, Cambridge, 1992.









\bibitem{ikeda}
A. Ikeda, {\em Global Prym-Torelli Theorem for double coverings of elliptic curves}, preprint, arxiv:1812.07837.


\bibitem{kanev-global-Torelli} V.~I. Kanev.  \newblock A global
  {T}orelli theorem for {P}rym varieties at a general point.
  \newblock {\em Izv. Akad. Nauk SSSR Ser. Mat.}, 46(2):244--268, 431,
  1982.

  
\bibitem{lange-ortega} H.~Lange and A.~Ortega.  \newblock Prym varieties of
  cyclic coverings.  \newblock {\em Geom. Dedicata}, 150:391--403,
  2011.

 \bibitem{magma} MAGMA Database of Small Groups; http://magma.maths.usyd.edu.au/magma/htmlhelp/
text404.htm.script. 







\bibitem{pietro-vale} V.O.~Marcucci, G.P. ~Pirola.  \newblock Generic
  Torelli theorem for Prym varieties of ramified
  coverings. Compos. Math. 148 (2012), no. 4, 1147--1170.


\bibitem{mn} Marcucci,  ~V, Naranjo,  ~J.C., Prym varieties of double coverings of elliptic curves,  Int. Math. Res. Notices 6 (2014), 1689-1698.

\bibitem{moh} Mohajer,  ~A,  On Shimura subvarieties of the Prym locus.  arXiv:1804.10131v2. 


\bibitem{moonen-linearity-1} B.~Moonen.  \newblock Linearity
  properties of {S}himura varieties. {I}.  \newblock {\em J. Algebraic
    Geom.}, 7(3):539--567, 1998.



\bibitem{moonen-oort} B.~Moonen and F.~Oort.  \newblock The {T}orelli
  locus and special subvarieties.  \newblock In {\em {H}andbook of
    {M}{oduli: Volume II}}, pages 549--94.  International {P}ress,
  Boston, MA, 2013.

\bibitem{mumford-Shimura} D.~Mumford.  \newblock A note of {S}himura's
 paper ``{D}iscontinuous groups and abelian varieties''.  \newblock  {\em Math. Ann.}, 181:345--351, 1969.




\bibitem{nagarama} D. S., Nagaraj; S. Ramanan, S.  \newblock
  Polarisations of type (1,2,...,2) on abelian varieties.   \newblock
  {\em Duke
  Math. J.} 80 (1995), no. 1, 157-194.





\bibitem{no} Naranjo, ~J.C., Ortega, ~ A., Verra, ~A., Generic injectivity of the Prym map for double ramified coverings. Trans of AMS. DOI: 10.1090/tran/7459.  

\bibitem{no1}
J.C.~Naranjo, A.~Ortega, {\em Global Prym-Torelli for double coverings ramified in at least $6$ points},  arxiv:2005.11108. 












\bibitem{penegini2013surfaces} M.~Penegini.  \newblock Surfaces
  isogenous to a product of curves, braid groups and mapping class
  groups.  \newblock In {\em Beauville surfaces and groups}, volume
  123 of {\em Springer Proc. Math. Stat.}, pages 129--148. Springer,
  Cham, 2015.


\bibitem{rojas} A. M. ~Rojas. Group actions on Jacobian varieties, Rev. Mat. Iber. 23 (2007), 397-
420.




\end{thebibliography}
\end{document}